\newtheorem{thm}{Theorem}
\newtheorem{lemma}[thm]{Lemma}
\newtheorem{corollary}[thm]{Corollary}
\newtheorem{proposition}[thm]{Proposition}
\newtheorem*{thm*}{Theorem}
\theoremstyle{definition}
\newtheorem{example}[thm]{Example}
\newtheorem{remark}[thm]{Remark}
\newtheorem{construction}[thm]{Construction}
\newcommand{\ph}{\varphi}
\newcommand{\la}{\longrightarrow}
\newcommand{\ol}{\mathcal{O}}
\newcommand{\pr}{\mathbb{P}}
\newcommand{\Q}{\mathbb{Q}}
\newcommand{\R}{\mathbb{R}}
\newcommand{\Z}{\mathbb{Z}}
\newcommand{\N}{\mathcal{N}_1}
\newcommand{\Nu}{\mathcal{N}^1}
\newcommand{\Sing}{\operatorname{Sing}}
\newcommand{\Pic}{\operatorname{Pic}}
\newcommand{\NE}{\operatorname{NE}}
\newcommand{\Exc}{\operatorname{Exc}}
\newcommand{\Lo}{\operatorname{Locus}}
\newcommand{\codim}{\operatorname{codim}}
\newcommand{\Bl}{\operatorname{Bl}}
\patchcmd{\section}{\normalfont}{\normalfont\large}{}{}
\patchcmd{\subsection}{\bfseries}{\scshape\centering}{}{}
\patchcmd{\subsection}{-.5em}{.5em}{}{}
\title{The Lefschetz defect of Fano varieties}
\author{C.~Casagrande}
\address{Universit\`a di Torino,
Dipartimento di Matematica,
via Carlo Alberto 10,
10123 Torino - Italy}
\email{cinzia.casagrande@unito.it}
\date{November 30, 2022}
\begin{document}

\maketitle

\noindent
Smooth, complex Fano varieties are a very natural class of projective varieties, and have been intensively studied both classically and in recent times, due to their significance in the framework of the Minimal Model Program.

The Lefschetz defect is an invariant of smooth Fano varieties that has been recently introduced in \cite{codim}; it is related to the Picard number $\rho_X$ of $X$, and to the Picard number of prime divisors in $X$.
This paper is a survey on this new invariant and its properties: we explain the definition of the Lefschetz defect $\delta_X$ and its origin,
the known results, and several examples, especially in dimensions $3$ and $4$;  we do not give new theoretical results.

In particular, we explain how the results on the Lefschetz defect allow to recover the classification of Fano $3$-folds with $\rho_X\geq 5$, due to Mori and Mukai. We also review  the known examples of Fano $4$-folds with $\rho_X\geq 6$, that are remarkably few:
apart products and toric examples, there are only 6 known families, with $\rho_X\in\{6,7,8,9\}$.







\tableofcontents

\section{Glossary}
\noindent We collect here the notation and terminology used in the paper; we suggest the reader to skip this section, and come back when needed.

Let $X$ be a projective variety.\\
$\N(X)$ (respectively $\Nu(X)$) is the vector space of one-cycles  (respectively Cartier divisors) in $X$, with coefficients in $\R$, up to numerical equivalence.\\
$[\,\text{-}\,]$ stands for the numerical equivalence class of a curve or a divisor.\\
$\rho_X=\dim\N(X)=\dim\Nu(X)$ is the Picard number of $X$.\\
$\NE(X)$ is the cone of effective curves in $\N(X)$, an extremal ray $R$ of $\NE(X)$ is a one-dimensional face.
If $D$ is a divisor, $D\cdot R>0$ means $D\cdot \gamma>0$ for any non-zero class $\gamma\in R$.

A contraction is a surjective morphism with connected fibers $\ph\colon X\to Y$ between normal projective varieties; $\ph$ is elementary if $\rho_X-\rho_Y=1$.

A conic bundle is a contraction of fiber type such that every fiber is isomorphic to a plane conic.

A small modification is a birational map $X\dasharrow X'$ that is an isomorphism in codimension one.

When $X$ is smooth and Fano, or more generally of Fano type,\footnote{A smooth projective variety $X$ is of Fano type if there exists an effective $\Q$-divisor $\Delta$ such that the pair $(X,\Delta)$ is log Fano, namely $(X,\Delta)$ has klt singularities and $-(K_X+\Delta)$ is ample, see \cite{prokshok}.} then $\NE(X)$ is closed and polyhedral, and there is a bijection between extremal rays $R$ of $\NE(X)$ and elementary contractions $\ph\colon X\to Y$, given by $R=\NE(X)\cap\ker \ph_*$, where $\ph_*\colon\N(X)\to\N(Y)$ is the pushforward. This is because varieties of Fano type are Mori dream spaces by \cite[Corollary 1.3.2]{BCHM}. We also set $\Lo(R):=\Exc(\ph)$.

For any closed subset $Z\subset X$ we set $\N(Z,X):=\iota_*(\N(Z))$, where  $\iota\colon Z \hookrightarrow X$ is the inclusion and $\iota_*\colon\N(Z)\to\N(X)$ is the induced pushforward.

$\Bl_m\pr^2$ is the blow-up of $\pr^2$ in $m$ points in general linear position.

A $\pr^r$-bundle is the projectivization of a  vector bundle.

We work over the field of complex numbers.
\section{The definition of Lefschetz defect and the case $\delta_X\geq 4$}\label{defi}
\noindent Let $X$ be a smooth  Fano variety. We introduce an invariant of $X$, the Lefschetz defect, which relates the Picard number of $X$ to that of its prime divisors.

Given a prime divisor $D$ in $X$, let us consider the natural pushforward $\iota_*\colon\N(D)\to\N(X)$ given by the inclusion $\iota\colon D\hookrightarrow X$, and set
$$\N(D,X):=\iota_*(\N(D))\subseteq\N(X).$$
This is the linear span, in $\N(X)$, of classes of curves contained in $D$. The Lefschetz defect of $X$ is defined as:
$$\delta_X:=\max\bigl\{\codim\N(D,X)\,|\,D\subset X\text{ a prime divisor}\bigr\}.$$
\noindent{\bf Remarks}\label{remarks}
\begin{enumerate}[{\bf 1.}]
\item\label{range}
  The linear subspace $\N(D,X)$ is always non-zero,
because if $C\subset D$ is any curve, we have $[C]\neq 0$ in $\N(X)$. Therefore
 $\delta_X\in\{0,\dotsc,\rho_X-1\}$; in particular we see that $\delta_X=0$ whenever $\rho_X=1$.
\item\label{surfaces}
When $\dim X=2$, a prime divisor $D\subset X$ is an irreducible curve, and $\N(D,X)=\R[D]$, thus $\delta_X=\rho_X-1$.
\item We can consider the dual picture:
  let $D\subset X$ be a prime divisor, and consider the restriction $r_D\colon H^2(X,\R)\to H^2(D,\R)$; then we have $\codim\N(D,X)=\dim\ker r_D$.
  
  Indeed there is a commutative diagram:
  $$
\begin{tikzcd}
\Nu(X) \arrow[r, "\tilde{r}_D"] \arrow[d, "\wr"]&{\Nu(D)}\arrow[d, hook]\\
    {H^2(X,\R)}\arrow[r, "r_D"]& {H^2(D,\R)}
\end{tikzcd}
$$
where $\tilde{r}_D$ is dual to $\iota_*$, and $\Nu(X)\cong H^2(X,\R)$ because $X$ is Fano.
\item
 One can also define the Lefschetz defect as:
 $$\delta_X=\max\bigl\{\dim\ker\bigl(r_D\colon H^2(X,\R)\to H^2(D,\R)\bigr)\,|\,D\text{ an effective divisor in }X\bigr\}$$
 (where with a slight abuse of notation we identify $D$ with its support).

 Indeed let $D$ be an effective divisor, and $D_1$ an irreducible component of $D$. Then $r_{D_1}$ factors as $H^2(X,\R)\to H^2(D,\R)\to H^2(D_1,\R)$, so that $\ker r_D\subseteq\ker r_{D_1}$, and the maximum above is always attained for some prime divisor.

 If $\dim X\geq 3$ and $D$ is ample, then $r_D$ is injective by Lefschetz's theorem on hyperplane sections, so that the Lefschetz defect depends on non-ample effective divisors, and measures the failure of this instance of Lefschetz's theorem.
\end{enumerate}
\setcounter{thm}{4}
 \begin{lemma}[the Lefschetz defect of a product variety] 
   If $X\cong S\times T$, then $\delta_X=\max\{\delta_S,\delta_T\}$.

   In particular, if $S$ is a surface, then $\delta_X=\max\{\rho_S-1,\delta_T\}$.
 \end{lemma}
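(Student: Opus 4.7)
The plan is to classify prime divisors $D\subset X=S\times T$ according to the behaviour of the two projections $\pi_S,\pi_T$, and to evaluate $\codim\N(D,X)$ separately in each class. To set the stage, I will use the Künneth formula together with $H^1(S)=H^1(T)=0$ (valid since $S,T$ are Fano) to identify $H^2(X,\R)=\pi_S^*H^2(S,\R)\oplus\pi_T^*H^2(T,\R)$, and dually $\N(X)=\N(S)\oplus\N(T)$. A dimension count then shows that any prime divisor $D\subset X$ falls into one of three mutually exclusive cases: (i) $D=D_S\times T$ for some prime divisor $D_S\subset S$; (ii) $D=S\times D_T$ symmetrically; or (iii) $D$ dominates both factors.

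I will dispatch cases (i) and (ii) by direct computation: in case (i) the inclusion gives $\N(D,X)=\N(D_S,S)\oplus\N(T)$, so $\codim_{\N(X)}\N(D,X)=\codim_{\N(S)}\N(D_S,S)$, and the supremum over $D_S$ is $\delta_S$. Case (ii) is symmetric and yields $\delta_T$. Together these already produce the easy lower bound $\delta_X\geq\max\{\delta_S,\delta_T\}$.

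For the reverse inequality I turn to case (iii), and use the dual description $\codim\N(D,X)=\dim\ker r_D$ from item (3) of the Remarks, extended to effective divisors by item (4). Writing $\alpha=\pi_S^*\alpha_S+\pi_T^*\alpha_T\in\ker r_D$, I plan to restrict $\alpha$ to a general fiber $F_T=D\cap(S\times\{t_0\})$ of $\pi_T|_D$: since $\pi_T$ is constant on $S\times\{t_0\}$ while $\pi_S$ restricts to an isomorphism, this reduces to $\alpha_S|_{F_T}=0$, i.e.\ $\alpha_S\in\ker r_{F_T}$. Because $F_T$ is an effective divisor in $S$, item (4) forces $\dim\ker r_{F_T}\leq\delta_S$. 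Surjectivity of $\pi_T|_D\colon D\to T$ makes $\pi_T|_D^*$ injective on $H^2$, so the relation $\pi_T|_D^*\alpha_T=-\pi_S|_D^*\alpha_S$ determines $\alpha_T$ from $\alpha_S$; hence $\dim\ker r_D\leq\delta_S$, and by symmetry also $\leq\delta_T$. Thus $\codim\N(D,X)\leq\max\{\delta_S,\delta_T\}$ in case (iii), closing the argument. The ``in particular'' clause then follows at once from item (2) of the Remarks, which gives $\delta_S=\rho_S-1$ whenever $\dim S=2$.

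The main obstacle I foresee is obtaining the sharp bound in case (iii): a purely primal approach using only classes of curves contained in fibers $F_S$ or $F_T$ yields merely $\codim\N(D,X)\leq\delta_S+\delta_T$, so the cohomological restriction argument, together with the injectivity of $\pi_T|_D^*$ on $H^2$, are both genuinely needed to sharpen this to $\max\{\delta_S,\delta_T\}$.
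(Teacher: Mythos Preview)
Your proof is correct, but it is the cohomological dual of the paper's argument rather than the same one. The paper works entirely on the primal side with $\N$ and the pushforward $\pi_*\colon\N(X)\to\N(S)$ along a \emph{single} projection: after reducing (WLOG) to $\delta_S\geq\delta_T$, it observes that if $D$ dominates $S$ then $\pi_*(\N(D,X))=\N(S)$, and bounds $\N(D,X)\cap\ker\pi_*$ from below by $\N(\pi^{-1}(p)\cap D,T)$, using that $\pi^{-1}(p)\cap D$ is either $T$ or a divisor in $T$. Your case (iii) does the mirror computation in $H^2$ via Remarks~3--4: you bound the $\alpha_S$-part of $\ker r_D$ by $\ker r_{F_T}$, and then eliminate $\alpha_T$ using injectivity of $(\pi_T|_D)^*$.

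What each buys: the paper's route is slightly more elementary, since the key surjectivity $\pi_*(\N(D,X))=\N(S)$ follows at once from lifting curves through the surjection $D\to S$, whereas your injectivity of $(\pi_T|_D)^*$ on $H^2$ for a possibly singular $D$ needs a word of justification (pass to a resolution $\tilde D\to D$ and use that pullback is injective for surjective morphisms of smooth projective varieties). Conversely, your approach makes transparent a sharper fact you noticed in passing: in case~(iii) one actually gets $\codim\N(D,X)\leq\min\{\delta_S,\delta_T\}$, not just the $\max$. The paper's computation yields the same sharpening if one unwinds it, but your formulation highlights it. Your final remark that the naive curve-class count only gives $\delta_S+\delta_T$ is apt, and is exactly why both proofs need the extra ``determination'' step (injectivity of $(\pi_T|_D)^*$ for you, surjectivity of $\pi_*|_{\N(D,X)}$ for the paper).
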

 \begin{proof}
   We can assume
  that $\delta_S\geq\delta_T$. Let $D_S\subset S$ be a prime divisor with $\codim\N(D_S,S)=\delta_S$. Then $\codim\N(D_S\times T,X)=\delta_S$, so that $\delta_X\geq \delta_S$.

Conversely,
  let $D\subset X$ be a prime divisor, and $\pi\colon X\to S$ the projection.

  If $\pi(D)\subsetneq S$, then $D=\pi(D)\times T$ and $\codim \N(D,X)=\codim\N(\pi(D),S)\leq \delta_S$.

  Suppose instead that $\pi(D)=S$, and consider the pushforward $\pi_*\colon\N(X)\to\N(S)$. Then we have 
$\pi_*(\N(D,X))=\N(S)$, and hence $\dim\N(D,X)=\rho_S+\dim(\N(D,X)\cap\ker\pi_*)$.

  Let $p\in S$; we have a natural identification $\N(T)\cong \N(\pi^{-1}(p), X)=\ker\pi_*$. Moreover $\pi^{-1}(p)\cap D$ is either $T$ or a divisor in $T$; in both cases $\dim(\N(D,X)\cap\ker\pi_*)\geq \dim\N(\pi^{-1}(p)\cap D,T)\geq \rho_T-\delta_T\geq\rho_T-\delta_S$, so that
  $\dim\N(D,X)\geq \rho_X-\delta_S$, and we conclude that $\delta_X= \delta_S$.

  Finally if $\dim S=2$ we have $\delta_S=\rho_S-1$ by Remark \ref{surfaces}.
\end{proof} 
   \begin{example}[$\delta_X$ and disjoint divisors]\label{disjoint}
Let $D,E_1,\dotsc,E_r\subset X$ be prime divisors, and assume that $D\cap(E_1\cup\cdots\cup E_r)=\emptyset$. If the classes $[E_1],\dotsc,[E_r]\in\Nu(X)$ are linearly independent, then $\delta_X\geq\codim\N(D,X)\geq r$.

Indeed for every curve $C\subset D$ we have $E_i\cdot C=0$ for every $i=1,\dotsc,r$, so that $[C]\in E_1^{\perp}\cap\cdots\cap E_r^{\perp}$ (where $E_i^{\perp}\subset\N(X)$ is the hyperplane of classes orthogonal to $E_i$), and finally $\N(D,X)\subseteq E_1^{\perp}\cap\cdots\cap E_r^{\perp}$.

For instance, as soon as $X$ contains two disjoint prime divisors, we have $\delta_X>0$.
\end{example}
The next example concerns the case $\dim X=3$, and shows how birational geometry can be used to find prime divisors $D$ with small $\dim\N(D,X)$.
\begin{example}[\cite{gloria}, Lemma 5.1]\label{es3fold}
  Let $X$ be a Fano $3$-fold.
  We show that $X$ always contains a prime divisor $D$ with $\dim\N(D,X)\leq 2$, so that $\delta_X\in\{\rho_X-2,\rho_X-1\}$.

  Let $\ph\colon X\to Y$ be an elementary contraction. It is well-known that $\ph$ cannot be small: it is either divisorial, or of fiber type. In any case there exists a prime divisor $D\subset X$ such that $\dim \ph(D)\leq 1$.

  Consider the pushforward $\ph_*\colon \N(X)\to \N(Y)$; this is a surjective linear map with one-dimensional kernel. We have $\ph_*(\N(D,X))=\N(\ph(D),Y)$; on the other hand $\N(\ph(D),Y)=\{0\}$ if $\ph(D)=\{pt\}$, and  $\N(\ph(D),Y)=\R[\ph(D)]$ if $\ph(D)$ is a curve. In any case we have $\dim\N(\ph(D),Y)\leq 1$ and hence $\dim\N(D,X)\leq 2$.
  \end{example}
The following is the main property of the Lefschetz defect.
  \begin{thm}[\cite{codim}, Th.~3.3]\label{codim}
    For every smooth Fano variety $X$ we have $\delta_X\in\{0,\dotsc,8\}$.

    Moreover, if $\delta_X\geq 4$, then $X\cong S\times T$ where $S$ is a del Pezzo surface with
$\rho_S=\delta_X+1$, and $\delta_T\leq\delta_X$.
\end{thm}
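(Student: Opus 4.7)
The plan is to pick a prime divisor $D\subset X$ with $\codim\N(D,X)=\delta_X\geq 4$ and extract from it both a del~Pezzo fibration and a product splitting. I would first exploit the dual description from Remark~\textbf{3}: the kernel of $r_D\colon H^2(X,\R)\to H^2(D,\R)$ has dimension $\delta_X\geq 4$, so there are at least four linearly independent divisor classes numerically trivial on $D$. Using $\NE(X)$ (closed and polyhedral, since $X$ is Fano), I would look at the face
$$F:=\NE(X)\cap \N(D,X)^{\perp\perp}\cap\{\text{classes orthogonal to every divisor vanishing on }D\},$$
and more operationally at extremal rays $R$ whose locus $\Lo(R)$ is disjoint from $D$. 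By Example~\ref{disjoint}, any collection of such rays (together with curves in $D$) has codimension controlled by $\delta_X$, and the goal is to show that contracting everything \emph{except} such rays produces a contraction $\pi\colon X\to T$ whose general fibre $F$ satisfies $\rho_F=\delta_X+1$.

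Next I would identify the fibre. Because all curves collapsed by $\pi$ lie either in divisors disjoint from $D$ or are numerically in $\N(D,X)$, the general fibre $F$ has Picard number exactly $\delta_X+1$ and is covered by rational curves from several extremal rays; standard adjunction and the Fano condition on $X$ force $F$ to be a smooth Fano surface, i.e.\ a del Pezzo surface $S$ with $\rho_S=\delta_X+1$. This is where the bound $\delta_X\leq 8$ falls out: a smooth del Pezzo surface has $\rho\leq 9$, so $\delta_X+1\leq 9$, giving $\delta_X\leq 8$.

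The hardest step is upgrading the fibration $\pi\colon X\to T$ to a global product $X\cong S\times T$. The strategy is to produce, for each of the $\rho_S$ extremal rays of $\NE(S)$, a global divisor on $X$ that restricts on each fibre to the corresponding $(-1)$-curve (or ruling) and is horizontal for $\pi$; these divisors together with $\pi^{-1}(T)$ exhaust $\rho_X$ and exhibit $X$ as a product. Concretely, I would use the divisors coming from $\ker r_D$ to build sections of suitable line bundles, contract them to realize $X$ as a tower of $\pr^1$-bundles or blow-ups, and then check using the triviality of the relative Picard scheme along $T$ that the tower is constant in $T$. The main obstruction is ruling out nontrivial twisting of this tower over $T$; rigidity of the numerical structure (each collapsed ray is disjoint from a divisor pulled back from $T$) is what should force the splitting. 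Finally, $\delta_T\leq\delta_X$ is immediate from Lemma~\ref{}: a prime divisor in $T$ pulls back to a prime divisor in $X=S\times T$, so the Lefschetz defect of $T$ cannot exceed that of $X$.
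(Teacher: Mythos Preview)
Your proposal has a genuine gap and also diverges sharply from the approach the paper outlines. The paper is a survey and does not reproduce the full proof, but Section~\ref{MMP} explains the actual starting point: one runs an MMP for $-D$, i.e.\ one uses extremal rays $R_i$ with $D_i\cdot R_i>0$, not rays whose locus is disjoint from $D$ as you propose. The key output is Proposition~\ref{MP2}: for $s=\codim\N(D,X)-1$ one obtains pairwise disjoint smooth prime divisors $E_1,\dotsc,E_s$, each a $\pr^1$-bundle with $E_j\cdot f_j=-1$, $D\cdot f_j>0$, and $[f_j]\notin\N(D,X)$. These $\pr^1$-bundle divisors are what eventually assemble into the surface factor; the paper then says explicitly that the remaining argument is technical and refers to \cite[\S 3]{codim}.

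The central gap in your sketch is the dimension of the fibre. You assert that ``standard adjunction and the Fano condition on $X$ force $F$ to be a smooth Fano surface'', but nothing in your construction pins down $\dim F=2$. Knowing only that the general fibre has Picard number $\delta_X+1\geq 5$ does not force it to be a surface; a priori it could be a higher-dimensional Fano with large Picard number. The bound $\delta_X\leq 8$ (and the identification of $S$ as a del Pezzo surface) in the real proof comes precisely from the $\pr^1$-bundle structure of the $E_j$: these are the $(-1)$-curves of the surface factor seen in families, and it is their explicit description via the MMP for $-D$ that makes the fibre two-dimensional. Your route, via rays with $\Lo(R)\cap D=\emptyset$, does not produce such divisors, and your ``face'' $F$ is not well-defined as written (the double orthogonal $\N(D,X)^{\perp\perp}$ is just $\N(D,X)$, and Example~\ref{disjoint} concerns disjoint prime \emph{divisors}, not loci of extremal rays). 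Finally, even granting a del Pezzo fibration, your plan for trivializing it (``rigidity of the numerical structure\dots should force the splitting'') is where the genuine work lies, and the sketch does not engage with it.
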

Thus, if a Fano variety $X$ is not a product, then $\delta_X\in\{0,1,2,3\}$. 
\begin{remark}[bounding the Picard number of $X$]
Let $X$ be a smooth Fano variety and  $D\subset X$ a prime divisor.
We have $\dim\N(D,X)\leq\rho_D$ and hence 
  $\delta_X\geq\rho_X-\rho_D$.  Thus Theorem \ref{codim} implies that $\rho_X\leq\rho_D+8$, and if  $\rho_X\geq\rho_D+4$, then $X$ is a product.

Note however that this result does not immediately give   an inductive bound on $\rho_X$ in terms of the dimension of $X$, because
we do not know whether $X$ contains a prime divisor $D$ which is smooth and Fano.

It is indeed expected that $\rho_X\leq\frac{9}{2}\dim X$; this is attained in every even dimension $2m$ by products $X=S_1\times\cdots\times S_m$ of degree one del Pezzo surfaces, which have $\rho_X=9m$ and $\delta_X=8$. We will discuss the maximal Picard number for Fano $3$-folds and $4$-folds in Corollary \ref{3folds} and \S \ref{sec4folds} respectively.
\end{remark}
\begin{remark}[the origin of the notion of Lefschetz defect]\label{origin}
  Bonavero, Campana, and Wi{\'s}niewski   \cite{bonwisncamp} have classified smooth Fano varieties obtained by blowing-up a point in a smooth variety; this has been later generalized by Tsukioka \cite{toru} to smooth Fano varieties containing a divisor $D\cong\pr^{n-1}$ with negative normal bundle. In both papers, the main strategy is to consider the divisor $D\cong\pr^{n-1}$, an extremal ray $R$ of $\NE(X)$ such that $D\cdot R>0$, and the associated elementary contraction $\ph\colon X\to Y$.
  
The pushforward $\ph_*\colon \N(X)\to \N(Y)$ is a surjective linear map with one-dimensional kernel.
  Since $D\cong\pr^{n-1}$, the space $\N(D,X)$ is one-dimensional, and we have two possibilities: either $\ker\ph_*=\N(D,X)$  and $\ph(D)=\{pt\}$, or $\ker\ph_*\cap\N(D,X)=\{0\}$ and $\ph$ is finite on $D$. In this last case, if $F\subset X$ is a non-trivial fiber of $\ph$, we have $D\cap F\neq \emptyset$ (because $D$ has positive intersection with every curve contracted by $\ph$) and $\dim(D\cap F)=0$, so that $\dim F=1$.
  On the other hand,
  elementary contractions of smooth Fano varieties having only fibers of dimension $\leq 1$ are classified \cite{ando,wisn}: they are either a conic bundle, or a blow-up of a smooth, codimension $2$ subvariety in a smooth variety.

  In this way one gets a very special elementary contraction of $X$, and this argument may be iterated in the target $Y$ of $\ph$. Here the relevant property is that $\dim\N(D,X)=1$, more than $D\cong\pr^{n-1}$.

  Later, in \cite{31}, the author has studied Fano varieties having a birational elementary contraction $f\colon X\to X'$ sending the exceptional divisor $E$ to a curve (for instance the blow-up of a smooth curve in $X'$). It is not difficult to see that $\dim\N(E,X)=2$ in this case (see Example \ref{es3fold}), and
  prime divisors $D$ with $\dim\N(D,X)=2$ play a special role in   \cite{31}. This assumption is still applied by studying
  extremal rays $R$ such that $D\cdot R>0$, and the associated elementary contractions.

  Finally this led the author to generalize this strategy in
  \cite{codim}, using birational geometry and the MMP. We give an overview in the following section.
\end{remark}  
\section{Studying $\codim\N(D,X)$ via birational geometry}\label{MMP}
\noindent Let $X$ be a smooth Fano variety and $D\subset X$ a prime divisor.
In this section we explain a construction, based on birational geometry, which allows to get information on  $X$ as soon as $\codim\N(D,X)>0$.
We refer the reader to \cite{kollarmori,hukeel} for the terminology in birational geometry and Mori dream spaces, and to \cite[\S 2]{codim} for more details.

Since Fano varieties are Mori dream spaces 
\cite[Corollary 1.3.2]{BCHM}, one can run a MMP for any divisor  \cite[Proposition 2.2]{codim}.  We consider here
a MMP for $-D$, which means that we consider \emph{extremal rays having positive intersection with $D$}, as in Remark \ref{origin}. Since $-D$  can never become semiample in the MMP, the program must end with a fiber type contraction. This means that we get a sequence
$$
X=X_0\stackrel{\sigma_0}{\dasharrow} X_1 \dasharrow\quad\cdots
\quad
\dasharrow X_{k-1}\stackrel{\sigma_{k-1}}{\dasharrow} X_k\stackrel{\ph}{\la}Y
$$
such that:
\begin{enumerate}[$\bullet$]
\item every $X_i$ is a normal and $\Q$-factorial projective variety, 
and the transform  $D_i\subset X_i$  of $D$ is a prime divisor in $X_i$;
\item for every $i=0,\dotsc,k-1$ there exists an extremal
  ray $R_i$ of $X_i$ such that $D_i\cdot
  R_i>0$,  $\Lo(R_i)\subsetneq X_i$, and 
  $\sigma_i$ is either the contraction of $R_i$ (if $R_i$ is
  divisorial), or its flip (if $R_i$ is small);
\item  
there exists an extremal ray $R_k$ in $X_k$, with a fiber type
  contraction $\ph\colon X_k\to Y$,
such
  that $D_k\cdot
  R_k>0$.
\end{enumerate}

Moreover, since $X$ is Fano, using an MMP with scaling of $-K_X$, in the sequence above we can also assume that $-K_{X_i}\cdot R_i>0$ for every $i=0,\dotsc,k$ \cite[Proposition 2.4]{codim}.

Then we analyse how $\codim\N(D_i,X_i)$ varies along the steps of the MMP. It is not difficult to show that, for 
 every $i=0,\dotsc,k-1$, we have:
$$\codim\N(D_{i+1},X_{i+1})=\begin{cases} \codim\N(D_i,X_i)\quad&\text{if $R_i\subset\N(D_i,X_i)$,}\\
\codim\N(D_i,X_i)-1\quad&\text{if $R_i\not\subset\N(D_i,X_i)$}
\end{cases}
$$
(see \cite[Lemma 2.6(3)]{codim}).

Let us consider now the last step, given by the fiber type contraction $\ph\colon X_k\to Y$. Since $D_k\cdot R_k>0$, the prime divisor $D_k\subset X_k$ intersects every fiber of $\ph$, namely $\ph(D_k)=Y$. 
Let $\ph_*\colon\N(X_k)\to\N(Y)$
be the pushforward; then 
$\ph_*(\N(D_k,X_k))=\N(\ph(D_k),Y)=\N(Y)$. On the other hand $\ph$ is elementary, thus $\dim\ker\ph_*=1$, and we get:
$$\codim\N(D_k,X_k)\leq 1.$$

In conclusion: \emph{we have at least $\codim\N(D,X)-1$ steps where $\codim\N(D_i,X_i)$ drops, namely where $R_i\not\subset\N(D_i,X_i)$.} These steps are very special, let us describe them.

Fix $i\in\{0,\dotsc,k-1\}$ such that $R_i\not\subset\N(D_i,X_i)$, and let $\psi\colon X_i\to Z$ be the associated elementary birational contraction (divisorial or small). 
  
Let $F\subset X_i$ be a non-trivial fiber of $\psi$. We have
$D_i\cdot R_i>0$, thus $D_i$ has positive intersection with every curve contracted by $\psi$, so that
$D_i\cap F\neq \emptyset$. On the other hand,  a curve $C\subset D_i\cap F$ would have class $[C]\in\N(D_i,X_i)\cap R_i$, which is impossible because $R_i\not\subset\N(D_i,X_i)$ by assumption. We conclude that $\dim (D_i\cap F)=0$ and
 $\dim F=1$.

It turns out that since $-K_{X_i}\cdot R_i>0$,  the property of having fibers of dimension $\leq 1$ makes $\psi$ very special. If $X_i$ is smooth, then
\cite{ando,wisn} yield that $\psi$ is divisorial (so that  $\psi=\sigma_i\colon X_i\to X_{i+1}$)
and is the blow-up of a smooth, codimension $2$ subvariety.  

In general $X_i$ is singular, but a careful analysis shows that $\psi=\sigma_i\colon X_i\to X_{i+1}$ is a divisorial contraction, that $\Exc(\sigma_i)$ is contained in the open subset where the birational map $X\dasharrow X_i$ is an isomorphism (in particular, it is contained in the smooth locus of $X_i$), and that $\sigma_i$ is the blow-up of  a smooth, codimension $2$ subvariety contained in the smooth locus of $X_{i+1}$ \cite[Lemma 2.7(1)]{codim}. For this argument, it is crucial
to have the assumption that $-K_{X_j}\cdot R_j>0$ for every $j$.

Let $E_i\subset X$ be the transform of $\Exc(\sigma_i)\subset X_i$. Then $E_i$ is a smooth prime divisor, with a $\pr^1$-bundle structure, such that if $f_i\subset E_i$ is a fiber, we have $E_i\cdot f_i=-1$, $D\cdot f_i>0$, and $[f_i]\not\in \N(D,X)$ \cite[Lemma 2.7(3)]{codim}.

 In the end we get the following.
\begin{proposition}[\cite{codim}, Proposition 2.5]\label{MP2}
Let $X$ be a smooth Fano variety and $D\subset X$ a prime
divisor with $\codim\N(D,X)>0$. 

Then there exist
pairwise disjoint smooth prime divisors $E_1,\dotsc,E_s\subset X$, with
$s=\codim\N(D,X)-1$,
such that
 every $E_j$ is a $\pr^1$-bundle with  $E_j\cdot
f_j=-1$, where  $f_j\subset E_j$ is a fiber;
moreover $D\cdot f_j>0$ and $[f_j]\not\in\N(D,X)$. In particular
$E_j\cap D\neq\emptyset$ and $E_j\neq D$.
\end{proposition}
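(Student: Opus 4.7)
The plan is to run a $(-D)$-MMP with scaling of $-K_X$, as set up in the preceding discussion, and to collect at each codimension-dropping step the exceptional divisor of the corresponding birational contraction, transported back to $X$. The resulting sequence $X=X_0\dasharrow\cdots\dasharrow X_k\stackrel{\ph}{\to}Y$ satisfies, by the drop formula recalled above, $\codim\N(D_{i+1},X_{i+1})=\codim\N(D_i,X_i)$ when $R_i\subset\N(D_i,X_i)$ and $\codim\N(D_i,X_i)-1$ otherwise. Since the final fiber type contraction $\ph$ forces $\codim\N(D_k,X_k)\leq 1$, there must be at least $s=\codim\N(D,X)-1$ indices $i_1<\cdots<i_s$ where the ray $R_{i_j}$ fails to lie in $\N(D_{i_j},X_{i_j})$.

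At any such drop index $i$, I would first observe that a non-trivial fiber $F$ of the associated extremal contraction must satisfy $\dim F=1$: since $D_i\cdot R_i>0$, the divisor $D_i$ meets every such $F$, but any curve in $D_i\cap F$ would have class in $\N(D_i,X_i)\cap R_i=\{0\}$, contradiction. Combined with $-K_{X_i}\cdot R_i>0$, this places us in the setting of \cite[Lemma 2.7]{codim}, whose conclusion I would invoke: $\sigma_i$ is divisorial, it is the blow-up of a smooth codimension $2$ subvariety lying in the smooth locus of $X_{i+1}$, and $\Exc(\sigma_i)$ is contained in the isomorphism locus of $X\dasharrow X_i$. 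The transform $E_i\subset X$ is therefore a smooth prime divisor with the same $\pr^1$-bundle structure, and for a fiber $f_i$ one reads off $E_i\cdot f_i=-1$ directly from the blow-up, $D\cdot f_i>0$ from $D_i\cdot R_i>0$, and $[f_i]\not\in\N(D,X)$ because the pushforward of $\N(D,X)$ through the MMP lands in $\N(D_i,X_i)$, which does not contain $R_i=\R_{\geq 0}[f_i]$.

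The remaining, and most delicate, point is pairwise disjointness of $E_{i_1},\dotsc,E_{i_s}$, which is where I expect the main obstacle to lie. The plan is inductive on the smaller index: for $i<j$ two drop indices, the strict transform of $E_j$ in $X_i$ agrees with $E_j\subset X$ under the isomorphism $X\setminus(\text{indet.})\cong X_i\setminus\Exc(\sigma_i)$, so it only remains to exclude intersection with $\Exc(\sigma_i)$ itself. The intended route is to trace a hypothetical intersection curve forward through the subsequent MMP steps and derive a contradiction either with $[f_i]\not\in\N(D_i,X_i)$ or with the description of $E_j$ in $X_j$ as the exceptional divisor of a blow-up; this is the content of \cite[Lemma 2.7(3)]{codim}. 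Once disjointness is in hand, the collection $E_{i_1},\dotsc,E_{i_s}$ meets every requirement of the proposition.
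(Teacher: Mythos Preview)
Your proposal is correct and follows exactly the approach outlined in the paper: run a $(-D)$-MMP with scaling of $-K_X$, count the drop steps via the formula for $\codim\N(D_i,X_i)$, use the fiber-dimension argument together with $-K_{X_i}\cdot R_i>0$ to invoke \cite[Lemma~2.7]{codim} at each drop, and pull the exceptional divisors back to $X$. The survey text itself stops short of arguing pairwise disjointness (it simply defers to \cite[Proposition~2.5]{codim}), so your flagging of this as the delicate point, and your sketch via the isomorphism-locus statement in \cite[Lemma~2.7]{codim}, is appropriate and in line with the original source.
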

This construction is the starting point of the proof of Theorem \ref{codim}. The rest of the proof \cite[\S 3]{codim} is very technical and we will not enter into it.

\medskip

Let us give now 
an application to Fano varieties with
maximal Lefschetz defect.
We have seen in Remark \ref{range} that $\delta_X\leq \rho_X-1$;
the condition $\delta_X=\rho_X-1$ is equivalent to asking that $X$ contains a prime divisor $D$ with $\dim\N(D,X)=1$, for instance a prime divisor $D$ with $\rho_D=1$. This implies that $\rho_X\leq 9$ by Theorem \ref{codim}, but when $\dim X\geq 3$, the stronger bound $\rho_X\leq 3$ holds. This was originally proved in \cite{toru}; we give a different proof using the previous proposition.
\begin{proposition}
  \label{rho-1}
Let $X$ be a smooth Fano variety with $\dim X\geq 3$ and $\delta_X=\rho_X-1$. Then $\rho_X\leq 3$. 
\end{proposition}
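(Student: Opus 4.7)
The plan is to argue by contradiction: suppose $\rho_X \geq 4$. A first reduction uses Theorem \ref{codim}: if $\rho_X \geq 5$, then $\delta_X = \rho_X - 1 \geq 4$, and the theorem forces $X \cong S \times T$ with $\rho_S = \delta_X + 1 = \rho_X$, so $\rho_T = 0$, absurd. Hence it suffices to rule out $\rho_X = 4$ (equivalently, $\delta_X = 3$) under the hypothesis $\dim X \geq 3$.

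Next I would apply Proposition \ref{MP2} to a prime divisor $D$ with $\N(D,X) = \R[C_0]$: this produces two pairwise disjoint smooth prime divisors $E_1, E_2 \subset X$, each a $\pr^1$-bundle with fiber $f_j$ satisfying $E_j \cdot f_j = -1$, $D \cdot f_j > 0$, $[f_j] \notin \N(D,X)$, and $E_j \cap D \neq \emptyset$. The intersection numbers $E_i \cdot f_j = -\delta_{ij}$ make $[E_1], [E_2]$ linearly independent in $\Nu(X)$. The hypothesis $\dim X \geq 3$ enters decisively here: each $D \cap E_j$ has dimension $\geq 1$ and therefore contains a curve $C'_j$, whose class must be $[C'_j] = \mu_j [C_0]$ for some $\mu_j > 0$. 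Since $C'_j \subset E_j$ is disjoint from $E_i$ for $i \neq j$, we get $E_i \cdot C'_j = 0$, hence $E_1 \cdot C_0 = E_2 \cdot C_0 = 0$. Thus $[C_0]$ is confined to the $2$-dimensional subspace $W := E_1^\perp \cap E_2^\perp \subset \N(X)$, and we obtain a direct-sum decomposition $\N(X) = W \oplus \R[f_1] \oplus \R[f_2]$.

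The main obstacle is then to extract a contradiction from this rigid configuration. The plan is to exploit that every curve component of $D|_{E_j}$ has class proportional to $[C_0]$ and, via the $\pr^1$-bundle structure of $E_j \to B_j$, is forced to lie in a specific section class of $E_j$-degree zero; combined with the Fano conditions $-K_X \cdot f_j > 0$ and $-K_X \cdot C_0 > 0$, and possibly with a further application of Proposition \ref{MP2} to $E_1$ (which satisfies $\codim\N(E_1, X) \geq 1$ by Example \ref{disjoint} applied to the disjoint pair $E_1, E_2$), this should over-determine the numerical structure of $X$ and yield the sought contradiction. Making this last step rigorous---ruling out smooth Fano $X$ with $\dim X \geq 3$, $\rho_X = 4$, $\delta_X = 3$---is the crux of the argument.
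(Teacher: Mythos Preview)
Your proposal contains all the ingredients of the paper's proof but stops one line short of the contradiction, which you then try to manufacture by much heavier means. Once you have established $E_2\cdot C_0=0$, you are done: since $\N(D,X)=\R[C_0]$, this says $E_2\cdot C'=0$ for \emph{every} curve $C'\subset D$, i.e.\ the Cartier divisor $E_2|_D$ is numerically trivial on the projective variety $D$. But $E_2\cap D\neq\emptyset$ and $E_2\neq D$, so $E_2|_D$ is a nonzero effective Cartier divisor; such a divisor is never numerically trivial (intersect with a general complete-intersection curve in $D$, or equivalently with $H^{\dim D-1}$ for $H$ ample on $D$). This is precisely the contradiction the paper obtains, and your elaborate plan involving the $\pr^1$-bundle structures, section classes, and a second application of Proposition~\ref{MP2} is unnecessary.

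Two further remarks. First, your preliminary reduction to $\rho_X=4$ via Theorem~\ref{codim} is correct but superfluous: the argument with $E_1,E_2$ works uniformly for any $\rho_X\geq 4$, since Proposition~\ref{MP2} always supplies at least $\rho_X-2\geq 2$ such divisors. Second, the claim $\mu_j>0$ is not justified (and not needed); all you use is $\mu_j\neq 0$, which follows since $C'_j$ is an honest curve.
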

\begin{proof}
  By contradiction, suppose that $\rho_X\geq 4$, and let $D\subset X$ be a prime divisor with $\dim\N(D,X)=1$. By Proposition \ref{MP2}, there exist $s=\rho_X-2\geq 2$ pairwise disjoint prime divisors $E_1,\dotsc,E_s$, all intersecting $D$, and distinct from $D$.
  
Since $\dim X\geq 3$, we have $\dim (D\cap E_1)\geq 1$; let $C\subset D\cap E_1$ be an irreducible curve.
 We have $E_2\cdot C=0$ because $E_1\cap E_2=\emptyset$. On the other hand $\dim\N(D,X)=1$, hence for every irreducible curve $C'\subset D$, there exists some $\lambda\in\Q$ such that $[C']=\lambda[C]$, thus $E_2\cdot C'=0$. Finally $E_2$ meets $D$, $E_2\neq D$, and $E_2$ has intersection zero with every curve in $D$, which is impossible.
\end{proof}  
The boundary case where $\rho_X=3$ and $\delta_X=2$ is described by a structure theorem \cite[Th.~3.8]{minimal}. In particular, Fano $4$-folds with
$\rho_X=3$ and $\delta_X=2$ are classified and studied in detail in
\cite{saverio}; they form 28 families, and only 3 of them are toric.

\medskip

We conclude this section by considering the case of Fano $3$-folds.
Example \ref{es3fold} and Proposition \ref{rho-1}  immediately imply the following.
\begin{lemma}\label{dim3}
  Let $X$ be a Fano 3-fold. Then $\delta_X\in\{\rho_X-2,\rho_X-1\}$, and $\delta_X=\rho_X-2$ as soon as $\rho_X\geq 4$.
\end{lemma}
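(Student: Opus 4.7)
The plan is to read off the statement directly from the two results just established, namely Example~\ref{es3fold} and Proposition~\ref{rho-1}; there is essentially no new content, and hence no real obstacle.

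First, I would record the two-sided bound on $\delta_X$. The upper bound $\delta_X \leq \rho_X-1$ is just Remark~\ref{range}. For the lower bound, I invoke Example~\ref{es3fold}: any elementary contraction of a Fano $3$-fold is divisorial or of fiber type (never small), so there is a prime divisor $D \subset X$ whose image under such a contraction has dimension at most $1$. The argument in that example yields $\dim\N(D,X) \leq 2$, whence $\codim\N(D,X) \geq \rho_X-2$ and therefore $\delta_X \geq \rho_X-2$. Combining, $\delta_X \in \{\rho_X-2,\rho_X-1\}$.

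Next I would handle the refinement when $\rho_X \geq 4$. Suppose for contradiction that $\delta_X = \rho_X-1$. Since $\dim X = 3 \geq 3$, Proposition~\ref{rho-1} applies and forces $\rho_X \leq 3$, contradicting the hypothesis $\rho_X \geq 4$. The only remaining possibility in the dichotomy above is $\delta_X = \rho_X-2$, which is exactly what we wanted.

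As noted, there is no serious technical step to surmount here: the whole point of the lemma is to package the previous two results for Fano $3$-folds. The substantive work is already in Example~\ref{es3fold} (where the non-smallness of elementary contractions in dimension $3$ is used) and in Proposition~\ref{rho-1} (which in turn rests on Proposition~\ref{MP2}). The proof of the lemma itself is a two-line deduction.
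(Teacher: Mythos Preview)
Your proposal is correct and matches the paper's approach exactly: the paper simply states that the lemma follows immediately from Example~\ref{es3fold} and Proposition~\ref{rho-1}, without writing out a proof. Your two-line deduction is precisely the intended argument.
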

\begin{table}[!h]\caption{The Lefschetz defect of Fano $3$-folds}\label{table3folds}
$$\begin{array}{||c|c||}
\hline\hline
   
    \rho=1   & \delta=0 \\
    
    \hline

   \rho=2   & 
              \delta\in\{0,1\}\\

  \hline

    \rho=3 &  \delta\in\{1,2\} \\

    \hline

    \rho\geq 4  & \delta=\rho-2\\

\hline\hline
  \end{array}$$
\end{table}

\begin{corollary}\label{3folds}
  Let $X$ be a Fano 3-fold with $\rho_X\geq 6$. Then $X\cong S\times\pr^1$ where $S$ is a del Pezzo surface. In particular $\rho_X\leq 10$.
\end{corollary}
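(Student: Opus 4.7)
The plan is to combine Lemma \ref{dim3} with the structure part of Theorem \ref{codim} and then read off the dimensions. First I would invoke Lemma \ref{dim3}: since $X$ is a Fano $3$-fold with $\rho_X\geq 6\geq 4$, we have $\delta_X=\rho_X-2\geq 4$. This puts us in the regime governed by Theorem \ref{codim}.

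Next, I would apply Theorem \ref{codim} directly: the hypothesis $\delta_X\geq 4$ forces a product decomposition $X\cong S\times T$ with $S$ a del Pezzo surface of Picard number $\rho_S=\delta_X+1=\rho_X-1$. Comparing dimensions, $\dim X=3$ and $\dim S=2$ give $\dim T=1$. Because $X$ is Fano and the pullbacks of $-K_S$ and $-K_T$ under the two projections must each be ample (being restrictions of $-K_X$ to factors), $T$ is a Fano curve, hence $T\cong\pr^1$. This yields the desired isomorphism $X\cong S\times\pr^1$.

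For the numerical bound, I would use $\rho_X=\rho_S+\rho_{\pr^1}=\rho_S+1$ together with the classification of smooth del Pezzo surfaces: any such $S$ has $\rho_S\leq 9$ (with equality only for $\Bl_8\pr^2$). Therefore $\rho_X\leq 10$.

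There is no real obstacle here: the whole statement is an immediate consequence of the two previously established results, modulo the trivial observations that a Fano factor must itself be Fano and that the unique smooth Fano curve is $\pr^1$. The only point worth checking carefully is the dimension count guaranteeing $\dim T=1$, which is automatic.
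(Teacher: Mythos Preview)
Your proof is correct and follows exactly the same route as the paper: invoke Lemma \ref{dim3} to get $\delta_X=\rho_X-2\geq 4$, then apply Theorem \ref{codim}. The paper's proof is a one-liner that leaves the dimension count and the identification $T\cong\pr^1$ implicit, whereas you spell these out; the only minor wording slip is that the \emph{pullbacks} of $-K_S$ and $-K_T$ to $X$ are not themselves ample (only nef), but the \emph{restrictions} of $-K_X$ to the fibers $\{s\}\times T$ and $S\times\{t\}$ are, which is what forces $T$ to be Fano.
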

This corollary was originally  proved by Mori and Mukai \cite[Theorem 1.2]{morimukai2}, via an explicit study of conic bundle structures on Fano 3-folds, and it was the conclusion of their classification of Fano $3$-folds with $\rho_X\geq 2$. 
\begin{proof}
  Since $\rho_X\geq 6$, by Lemma \ref{dim3} we have $\delta_X=\rho_X-2\geq 4$, and Theorem \ref{codim} yields the statement.
\end{proof}
In the next section  (Corollary \ref{class}) we will also recover the classification of Fano $3$-folds with $\rho_X=5$.
\section{Fano varieties with Lefschetz defect $3$}\label{secdelta3}
\noindent The bound $\delta_X\geq 4$ in Theorem \ref{codim} is sharp, as in every dimension $\geq 3$ there are Fano varieties with Lefschetz defect $3$ that are not products. On the other hand, the case $\delta_X=3$ is still very special: it was first partially studied in \cite{codim,delta3_4folds}, and then completely classified in \cite{delta3}. Let us now describe this classification, which  gives an explicit geometrical construction for Fano varieties with $\delta_X=3$.
\begin{construction}\label{construction}
 We start with   a smooth Fano variety $T$ with $\delta_T\leq 3$,
 and $L_1,L_2,L_3\in\Pic(T)$ such that:
 \begin{equation}
   \label{condition}
   -K_T+L_i-L_j\text{ is ample on $T$ for every }i,j=1,2,3.
   \end{equation}
We consider the $\pr^2$-bundle:
$$Z:=\pr_T(L_1\oplus L_2\oplus L_3)\stackrel{\ph}{\la}T,$$
and let $S_2,S_3\subset Z$ be the sections of $\ph$ corresponding to the projections $\oplus_{j=1}^3L_j\twoheadrightarrow L_2$ and 
$\oplus_{j=1}^3L_j\twoheadrightarrow L_3$ respectively.

The construction has two variants, that we call $A$ and $B$, as follows:
\begin{enumerate}
  \item[case $A$:]
let $S_1\subset Z$ be the section of $\ph$ corresponding to  the projection $\oplus_{j=1}^3L_j\twoheadrightarrow L_1$.
\item[case $B$:] we set $L_2=L_3=\ol_T$, and assume that $L_1\not\cong\ol_T$.
Let $H$ be the tautological line bundle of $Z$. We assume that the complete intersection $S_1$ of a general element in the linear system $|H|$ and one in $|2H|$ is smooth.\footnote{For instance, if $L_1$ is globally generated, then the vector bundle
$L_1\oplus\ol_T\oplus\ol_T$ is globally generated, thus $H$ is too, and by Bertini $S_1$ is smooth.}
\end{enumerate}
\begin{lemma}[\cite{delta3}, Rem.~4.2 and 4.3]\label{effective}
  The subvarieties $S_1,S_2,S_3$ are smooth, irreducible, of codimension 2, and pairwise disjoint.

  In case B, $\ph_{|S_1}\colon S_1\to T$ is finite of degree $2$, and $h^0(T,2L_1)>0$.
\end{lemma}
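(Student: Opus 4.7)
The plan is to separate the easy case of the sections of $\ph$ from the substantive case of $S_1$ in case $B$, and for the latter to reduce the analysis to a classical double cover by passing through an intermediate $\pr^1$-bundle.

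\emph{Sections.} In case $A$ each $S_i$, and in case $B$ each of $S_2,S_3$, is by construction the image of the morphism $T\to Z$ induced by a quotient $\mathcal{E}\twoheadrightarrow L_i$; thus $\ph|_{S_i}\colon S_i\to T$ is an isomorphism, giving smoothness, irreducibility, and codimension $\dim Z-\dim T=2$. Pairwise disjointness is fiberwise: the three sections occupy the three points of each $\pr(\mathcal{E}_t)\cong\pr^2$ corresponding to the three coordinate quotients, whose kernels are pairwise distinct.

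\emph{Reduction of $S_1$ in case $B$.} A general $H_1\in|H|$ corresponds to $\sigma=(s_1,c_2,c_3)\in H^0(T,L_1)\oplus\C^2$ with $(c_2,c_3)\neq 0$, and is thus a nowhere-vanishing section of $\mathcal{E}=L_1\oplus\ol_T^{\oplus 2}$; it fits into $0\to\ol_T\to\mathcal{E}\to\mathcal{F}\to 0$ with $\mathcal{F}\cong L_1\oplus\ol_T$, realizing $H_1=\pr_T(\mathcal{F})$ as a $\pr^1$-bundle over $T$ with $H|_{H_1}=\ol_{H_1}(1)$. Since $H|_{S_i}\cong L_i=\ol_T$ for $i=2,3$, the restriction $\sigma|_{S_i}$ is the nonzero constant $c_i$, so $H_1\cap S_i=\emptyset$ and a fortiori $S_1\cap S_i=\emptyset$. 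For general $H_2$, $H_1\not\subset H_2$, so $S_1=H_1\cap H_2$ has codimension $2$.

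\emph{Double cover analysis.} Inside $H_1=\pr_T(\mathcal{F})$, $S_1=H_2\cap H_1$ is cut out by a section of $\ol_{H_1}(2)$, which via $H^0(H_1,\ol_{H_1}(2))=H^0(T,\text{Sym}^2\mathcal{F})=H^0(T,L_1^2)\oplus H^0(T,L_1)\oplus\C$ is a triple $(\tau,\beta,\gamma)$; on a fiber the equation is a quadratic with discriminant $\Delta=\beta^2-4\tau\gamma\in H^0(T,L_1^2)=H^0(T,2L_1)$. Smoothness of $S_1$ (together with $L_1\not\cong\ol_T$) will force: (i) $\gamma\neq 0$, so the quadratic never vanishes identically on a fiber and $\ph|_{S_1}$ is finite of degree $2$; (ii) $\Delta\neq 0$, else $S_1$ is non-reduced, giving $h^0(T,2L_1)>0$; (iii) $\Delta\neq f^2$ for any $f\in H^0(L_1)\setminus\{0\}$, else $S_1$ splits as two sections meeting on the zero locus of $f$, which is nonempty since $L_1\not\cong\ol_T$. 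Items (ii) and (iii) together make $S_1\to T$ an integral double cover, so $S_1$ is irreducible.

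The main obstacle is the case analysis of (i)--(iii): in each forbidden scenario the relative quadratic degenerates into a product of linear factors whose components meet, and smoothness of $S_1$ together with the assumption $L_1\not\cong\ol_T$ (making zero loci of sections of $L_1$ nonempty) is what rules out these degenerations. The key simplification that makes this tractable is working inside the intermediate $\pr^1$-bundle $H_1\to T$, rather than directly in $Z$, so that $S_1$ appears as a relative conic and one may apply the standard double-cover dictionary.
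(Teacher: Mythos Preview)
The paper does not supply a proof of this lemma; it merely cites \cite{delta3}, Rem.~4.2 and 4.3, so there is no argument to compare against directly. Your proof is correct and stands on its own. The reduction to the intermediate $\pr^1$-bundle $H_1=\pr_T(L_1\oplus\ol_T)$ is the natural move, and your case analysis (i)--(iii) is sound: once $\gamma\neq 0$ the map $S_1\to T$ is flat of degree $2$, so reducibility forces a global factorization of the relative quadratic, hence $\Delta=f^2$ with $f\in H^0(T,L_1)$; since $L_1\not\cong\ol_T$ and $T$ is projective, any such $f$ has zeros, and the two linear components meet there, contradicting smoothness. One small remark: in ruling out the possibility of two \emph{disjoint} sections you are implicitly using that $\Pic(T)$ is torsion-free (true for smooth Fano $T$), so that $L_1\not\cong\ol_T$ prevents $\Delta$ from being nowhere-vanishing; alternatively, one can note directly that $L_1\not\cong\ol_T$ together with $\Delta\neq 0$ forces $h^0(T,L_1^{-1})=0$, whence $h^0(S_1,\ol_{S_1})=1$ and $S_1$ is connected.
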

Finally let $X\to Z$ be the blow-up of $S_1,S_2,S_3$, so that $X$ is a smooth projective variety with $\dim X=\dim T+2$ and $\rho_X=\rho_T+4$.
\end{construction}
\begin{thm}[\cite{delta3}, Prop.~1.2 and 1.3, Th.~1.4]\label{delta3}
The variety $X$ is Fano with $\delta_X=3$. Moreover every smooth Fano variety with Lefschetz defect $3$ is obtained in this way.
\end{thm}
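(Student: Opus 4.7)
The plan is to prove Theorem \ref{delta3} in two directions: the \emph{constructive} direction asserts that every $X$ built via Construction \ref{construction} is Fano with $\delta_X=3$, and the \emph{classification} direction asserts the converse.

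For the constructive direction, I would first verify ampleness of $-K_X$. Writing $\pi\colon X\to Z$ for the blow-up of the $\pr^2$-bundle $Z=\pr_T(L_1\oplus L_2\oplus L_3)$ along the three disjoint smooth codimension-two loci $S_1,S_2,S_3$ (Lemma \ref{effective}), we have $-K_X=\pi^*(-K_Z)-E_1-E_2-E_3$, where $E_j=\pi^{-1}(S_j)$. Using the standard formula for the anticanonical of a projectivized bundle, the hypothesis (\ref{condition}) translates precisely into the Nakai--Moishezon positivity required on $X$, once test curves are decomposed into curves in $T$, fibers of $\ph\colon Z\to T$, and fibers of the exceptional divisors. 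To establish $\delta_X=3$, note that $E_1,E_2,E_3\subset X$ are pairwise disjoint and linearly independent in $\Nu(X)$. By Example \ref{disjoint}, it suffices to produce a prime divisor $D\subset X$ disjoint from $E_1\cup E_2\cup E_3$; a natural candidate is the strict transform of a general element of a suitable twist $H+\ph^*M$ of the tautological line bundle, whose restriction to each $\pr^2$-fiber of $\ph$ is a line avoiding the three points cut out by $S_1,S_2,S_3$ in case A (respectively, the four points coming from $S_2$, $S_3$, and the degree-two $S_1$ in case B). This yields $\delta_X\geq 3$, and a product decomposition $X\cong S\times T'$ with $\rho_S\geq 5$ as in Theorem \ref{codim} is incompatible with the $\pr^2$-bundle structure $Z\to T$ combined with $\delta_T\leq 3$, so $\delta_X=3$.

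For the classification direction, let $X$ be a smooth Fano with $\delta_X=3$, and pick a prime divisor $D\subset X$ realizing $\codim\N(D,X)=3$. Running a $(-D)$-MMP as in Section \ref{MMP} and applying Proposition \ref{MP2} to $X$ yields two pairwise disjoint $\pr^1$-bundle exceptional divisors $E_1,E_2\subset X$, coming from the two divisorial MMP steps with $R_i\not\subset\N(D_i,X_i)$, and a final fiber type contraction $\ph\colon X_k\to Y$ with $\codim\N(D_k,X_k)\leq 1$. I would show: \emph{(i)} the MMP admits no flipping steps, so $X\to X_k$ is a composition of smooth blow-ups; \emph{(ii)} $\ph$ is a $\pr^2$-bundle, using that $D_k\cdot R_k>0$ and $-K_{X_k}\cdot R_k>0$ together with the contraction theorems of \cite{ando,wisn} restrict the fibers of $\ph$ sharply; and \emph{(iii)} there is a third $\pr^1$-bundle divisor $E_3\subset X$, disjoint from $E_1\cup E_2$, arising from an additional section (case A) or double cover (case B) of $\ph$ produced by the interplay of $D$ with the vertical structure. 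Once $X_k$ is identified with $\pr_Y(L_1\oplus L_2\oplus L_3)$, setting $T=Y$ and reading off each $L_j$ from the blow-up center corresponding to $E_j$, pushing down the ampleness of $-K_X$ yields condition (\ref{condition}) on the $L_i$, and $\delta_T\leq 3$ follows from $\delta_X=3$ and the $\pr^2$-bundle projection.

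The main obstacle is the classification direction, and most acutely step \emph{(iii)}: identifying a third disjoint exceptional divisor not already given by Proposition \ref{MP2}, and distinguishing cases A and B. Concretely, this requires detecting from intrinsic data on $X$ whether the third blow-up center is a section of $\ph$ or a connected double cover of $Y$ -- equivalently whether the general $\pr^2$-fiber of $\ph$ meets all three centers transversally at three distinct points (case A) or at two points with one of multiplicity two (case B). Verifying this dichotomy, together with ruling out flips in \emph{(i)} and pinning down the $\pr^2$-bundle structure in \emph{(ii)}, will absorb the bulk of the technical work.
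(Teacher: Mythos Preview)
This paper is a survey: Theorem~\ref{delta3} is not proved here but cited from \cite{delta3}, so there is no full proof in the paper to compare your proposal against. The only argument the paper supplies is a short illustration, after the statement, of why $\delta_X\geq 3$ in the constructive direction; everything else (Fano-ness of $X$, the upper bound $\delta_X\leq 3$, and the entire classification direction) is deferred to the reference.

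On that one illustrated step, your route and the paper's differ. The paper takes the exceptional divisor $E\subset X$ over the section $S_2\cong T$ as the witness: since $E$ is a $\pr^1$-bundle over $T$, one has $\rho_E=\rho_T+1=\rho_X-3$, and checking that $\N(E)\to\N(X)$ is injective gives $\codim\N(E,X)=3$ directly. Your approach instead looks for an auxiliary prime divisor $D$ disjoint from $E_1\cup E_2\cup E_3$ and appeals to Example~\ref{disjoint}. The paper's argument is shorter and avoids the issue of actually producing such a $D$: your candidate, the strict transform of a general member of $|H+\ph^*M|$, requires enough positivity of $M$ to move the hyperplane off the $S_j$'s in every fiber, which is an extra (if routine) verification. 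Either argument is fine; the paper's is just more direct.

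For the classification direction your outline is reasonable in spirit and broadly consistent with the MMP machinery of \S\ref{MMP}, but since the present paper gives no proof of that direction, there is nothing here to benchmark your steps \emph{(i)}--\emph{(iii)} against; you would need to consult \cite{delta3} itself to see how the third divisor and the $A$/$B$ dichotomy are actually extracted.
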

It is easy to exhibit a prime divisor in $X$ which realizes $\delta_X=3$, by looking at the exceptional divisors of the blow-up $X\to Z$.
Recall that $S_2$ is a section of $\ph$, so that $S_2\cong T$. Let $E\subset X$ be the exceptional divisor over $S_2$; then $E$ is a $\pr^1$-bundle over $S_2$ and $\rho_E=1+\rho_T=\rho_X-3$. One can check that the natural map $\N(E)\to\N(X)$ is injective, hence $\codim\N(E,X)=3$.

\medskip

Given $T$, it is not difficult to classify the possible choices of $L_i$'s as in cases $A$ and $B$ above; as an example, we describe the case $T=\pr^1$.
\begin{example}[the $3$-dimensional case]\label{ex}
  Let $X$ be a Fano $3$-fold. We assume that $\delta_X=3$,  equivalently that $\rho_X=5$ (see Table \ref{table3folds} on page \pageref{table3folds}).

 By  Theorem \ref{delta3}, $X$ is obtained with Construction \ref{construction} from a Fano variety $T$ of dimension $\dim X-2=1$, so that $T=\pr^1$. 

 We set $L_i:=\ol_{\pr^1}(a_i)$ for $i=1,2,3$.

  In case $A$, the $L_i$'s are determined up to shift and permutation, so that we can assume $a_3=0$ and $a_1\geq a_2\geq 0$. Condition \eqref{condition} yields $a_i\leq 1$ for $i=1,2$, so that  for $(a_1,a_2)$ we have the possibilities  $(0,0),(1,0),(1,1)$.

  On the other hand, in case $A$ we also have a commutative diagram:
   $$\xymatrix{ X\ar[r]\ar[d]& 
  {\pr_T((-L_1)\oplus(-L_2)\oplus(-L_3))}\ar[d]\\
  Z=\pr_T(L_1\oplus L_2\oplus L_3)\ar[r]&T}$$
(see \cite[Lemma 6.3]{delta3}), which fiberwise over $T$ is just the standard Cremona transformation in $\pr^2$. This means that the choice $L_1=L_2=\ol_{\pr^1}(1),L_3=\ol_{\pr^1}$ yields the same Fano $3$-fold $X$ as 
$L_1=L_2=\ol_{\pr^1}(-1),L_3=\ol_{\pr^1}$ and hence as $L_1=\ol_{\pr^1}(1),L_2=L_3=\ol_{\pr^1}$.

In conclusion, case $A$ yields two distinct Fano $3$-folds. The trivial choice
$L_1=L_2=L_3=\ol_{\pr^1}$ yields $Z=\pr^2\times \pr^1$ and $X= (\Bl_3\pr^2)\times\pr^1$.

The choice $L_1=\ol_{\pr^1}(1),L_2=L_3=\ol_{\pr^1}$ yields $Z=\pr_{\pr^1}(\ol(1)\oplus\ol\oplus\ol)$, the blow-up of $\pr^3$ along a line, and $X$ is the blow-up of $Z$ along three curves $S_1,S_2,S_3$, where
$S_1$ is the transform of a general line in $\pr^3$, while $S_2$ and $S_3$ are non-trivial fibers of the blow-up $Z\to\pr^3$.
 
 In case $B$ we have $a_2=a_3=0$, $|a_1|\leq 1$ by condition \eqref{condition}, $a_1\neq 0$ because $L_1\not\cong\ol_T$ by assumption, and finally $a_1>0$ because $2L_1$ is effective by Lemma \ref{effective}; thus the only possibility is $a_1=1$, and again $Z=\pr_{\pr^1}(\ol(1)\oplus\ol\oplus\ol)$. We note that $L_1=\ol_{\pr^1}(1)$ is globally generated, so that the additional assumption for case $B$ is fulfilled.
Then
$X$ is the blow-up of $Z$ along two curves $S_2,S_3$, which as before are non-trivial fibers of the blow-up $Z\to\pr^3$, and  the transform of a general conic in $\pr^3$.
\end{example}  
We have recovered the classification of Fano $3$-folds with $\rho_X=5$ \cite[Table 12.6]{fanoEMS}, as follows.
\begin{corollary}\label{class}
  There are three Fano $3$-folds with $\rho_X=5$, they have $\delta_X=3$ and are the following:
\begin{enumerate}[1)]
\item
  $X=(\Bl_3\pr^2)\times\pr^1$.

  \noindent This is \cite[Table 12.6, No.\ 3]{fanoEMS}, and it is obtained with Construction \ref{construction}, $T=\pr^1$, case $A$,
    $L_1=L_2=L_3=\ol$.
\item
  $X$ is the blow-up of $\pr^3$ along two skew lines and along two non-trivial fibers of the blow-up of one of the lines.

  \noindent This is \cite[Table 12.6, No.\ 2]{fanoEMS}, and is obtained with Construction \ref{construction},  $T=\pr^1$, case $A$,
$L_1=\ol(1)$, $L_2=L_3=\ol$.
\item
  $X$ is the blow-up of $\pr^3$ along a line and a conic (disjoint from each other), and along two non-trivial fibers of the blow-up of the line.

  \noindent This is \cite[Table 12.6, No.\ 1]{fanoEMS}, and is obtained with Construction \ref{construction},  $T=\pr^1$,
  case $B$, $L_1=\ol(1)$.
  \end{enumerate}
\end{corollary}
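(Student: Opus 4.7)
The plan is to combine Lemma \ref{dim3}, Theorem \ref{delta3}, and the case-by-case analysis carried out in Example \ref{ex}.

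First I would pin down the Lefschetz defect. Since $\rho_X=5\geq 4$, Lemma \ref{dim3} forces $\delta_X=\rho_X-2=3$. In particular, every Fano $3$-fold with $\rho_X=5$ falls under the hypothesis of Theorem \ref{delta3}, so it arises from Construction \ref{construction} for some smooth Fano base variety $T$ with $\dim T=\dim X-2=1$ and line bundles $L_1,L_2,L_3$ satisfying \eqref{condition}. The only smooth Fano curve is $T=\pr^1$, so we can write $L_i=\ol_{\pr^1}(a_i)$ and restrict to analyzing the admissible triples $(a_1,a_2,a_3)\in\Z^3$.

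Next I would enumerate the triples in cases $A$ and $B$ as in Example \ref{ex}. In case $A$, normalizing by shift and permutation gives $a_3=0$ and $a_1\geq a_2\geq 0$, while condition \eqref{condition} translates to $-2+a_i-a_j\geq 0$ failing, i.e.\ $|a_i-a_j|\leq 1$, leaving $(a_1,a_2)\in\{(0,0),(1,0),(1,1)\}$. The Cremona diagram recalled from \cite[Lemma 6.3]{delta3} identifies $(1,1,0)$ with $(-1,-1,0)$, and thus (after shifting by $\ol(1)$) with $(0,0,1)\sim(1,0,0)$, so exactly two distinct $3$-folds arise in case $A$: one from $L_1=L_2=L_3=\ol$, producing $Z=\pr^2\times\pr^1$ and hence $X=(\Bl_3\pr^2)\times\pr^1$; and one from $L_1=\ol(1)$, $L_2=L_3=\ol$, producing $Z=\pr_{\pr^1}(\ol(1)\oplus\ol^{\oplus 2})$ (the blow-up of $\pr^3$ along a line), with $X$ obtained by blowing up on $Z$ the transform of a general line together with two non-trivial fibers of $Z\to\pr^3$. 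In case $B$, we have $a_2=a_3=0$, $a_1\neq 0$, $|a_1|\leq 1$ by \eqref{condition}, and $2L_1$ effective by Lemma \ref{effective}, forcing $a_1=1$; the smoothness hypothesis of Construction \ref{construction} is automatic since $\ol(1)$ is globally generated. This gives the third $3$-fold, with $Z$ again the blow-up of $\pr^3$ along a line and $X$ the blow-up of $Z$ along two non-trivial fibers together with the transform of a general conic.

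Finally I would check that the three resulting Fano $3$-folds are pairwise non-isomorphic (for instance by comparing numerical invariants such as $(-K_X)^3$, or by identifying them directly with entries $1,2,3$ of \cite[Table 12.6]{fanoEMS}). The main obstacle is the bookkeeping in case $A$: one must carefully verify that the Cremona identification collapses the three a priori triples into only two Fano $3$-folds, and that no further identifications occur between cases $A$ and $B$. Once this is settled, the corollary follows directly, yielding the three families stated.
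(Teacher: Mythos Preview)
Your proposal is correct and follows essentially the same route as the paper: the corollary is presented there as a direct summary of Example \ref{ex}, which is exactly the argument you reconstruct (reduce to $\delta_X=3$ via Lemma \ref{dim3}/Table \ref{table3folds}, apply Theorem \ref{delta3} with $T=\pr^1$, and sort the admissible $(a_1,a_2,a_3)$ in cases $A$ and $B$ using \eqref{condition} and the Cremona identification). The only addition you make is the explicit remark that the three outputs must be checked pairwise non-isomorphic; the paper handles this implicitly by matching them with the distinct entries of \cite[Table 12.6]{fanoEMS}.
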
  
In a similar way, Theorem \ref{delta3} yields the classification of Fano $4$-folds with $\delta_X=3$.
Now $T$ is a del Pezzo surface with $\delta_T\leq 3$; on the other hand $\rho_T=\delta_T+1$ (see Remark \ref{surfaces}), so that $1\leq\rho_T\leq 4$ and  $5\leq\rho_X\leq 8$. Here we sum up the outcome.
\begin{proposition}[\cite{delta3_4folds} and \cite{delta3}, Prop.~1.5]
  \label{classification}
  Let $X$ be a Fano $4$-fold with $\delta_X=3$. Then $5\leq \rho_X\leq 8$ and there are 19  families for $X$, among which 14 are toric. More precisely:
  \begin{enumerate}[--]
  \item if $\rho_X=8$, then $X\cong \Bl_3\pr^2\times \Bl_3\pr^2$;
  \item if     $\rho_X=7$, then $X\cong \Bl_3\pr^2\times \Bl_2\pr^2$;
  \item  if     $\rho_X=6$, there are 11  families for $X$, among which 8 are toric, and one is $\pr^1\times Y$, $Y$ the non-toric Fano $3$-fold with $\rho_Y=5$;
     \item  if     $\rho_X=5$, there are 6  families for $X$, among which 4 are toric.
    \end{enumerate}
  \end{proposition}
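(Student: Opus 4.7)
The plan is to read Theorem \ref{delta3} as a reduction: every such Fano $4$-fold $X$ is produced by Construction \ref{construction} from some data $(T,L_1,L_2,L_3,\text{case } A/B)$, so the classification amounts to enumerating the admissible inputs up to isomorphism. Since $\dim X=4$, $T$ is a smooth Fano surface, i.e.\ a del Pezzo surface, with $\delta_T\leq 3$. By Remark \ref{surfaces} we have $\delta_T=\rho_T-1$, so $\rho_T\leq 4$, and the possible $T$ are exactly $\pr^2$, $\pr^1\times\pr^1$, $\Bl_1\pr^2$, $\Bl_2\pr^2$, $\Bl_3\pr^2$. The equality $\rho_X=\rho_T+4$ then immediately gives $5\leq\rho_X\leq 8$.

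For each $T$, one enumerates triples $(L_1,L_2,L_3)\in\Pic(T)^3$ satisfying the ampleness condition \eqref{condition}. Applied with $i\neq j$ in both orderings, that condition forces
\begin{equation*}
|(L_i-L_j)\cdot C|\;\leq\;(-K_T)\cdot C-1
\end{equation*}
for every irreducible curve $C\subset T$, so each difference $L_i-L_j$ lies in a bounded region of $\Pic(T)_\R$; the triple itself is moreover determined only up to the common twist $L_i\mapsto L_i+M$, under which $Z$ and $X$ are unchanged. For $T=\Bl_3\pr^2$ and $T=\Bl_2\pr^2$ the $(-1)$-curves span $\Pic(T)$, and the displayed inequality applied to them (where $(-K_T)\cdot C=1$) forces $(L_i-L_j)\cdot C=0$ on every $(-1)$-curve, hence $L_1=L_2=L_3$. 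Only case $A$ then applies, and $X$ is the product $\Bl_3\pr^2\times T$, producing the unique families claimed for $\rho_X=8$ and $\rho_X=7$.

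For the three remaining surfaces the enumeration is finite but requires more care. One writes each $L_i$ in a basis of $\Pic(T)$, solves the system \eqref{condition}, and quotients by the symmetries: the $\mathfrak{S}_3$-action permuting the $L_i$'s, and, in case $A$, the Cremona identification $(L_1,L_2,L_3)\sim(-L_1,-L_2,-L_3)$ recalled in Example \ref{ex}. In case $B$ one specialises to $L_2=L_3=\ol_T$, $L_1\not\cong\ol_T$ with $h^0(T,2L_1)>0$ from Lemma \ref{effective}, and checks that the complete intersection $S_1$ can be chosen smooth. Each surviving triple defines a Fano $4$-fold by Construction \ref{construction}; one then records which $X$ are toric (namely those with $T$ toric and the $L_i$'s torus-equivariant, so that $Z$, and hence $X$, is toric) and which are products (detected by a repeated trivial factor in the triple, up to symmetry). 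A direct count over $T\in\{\pr^2,\pr^1\times\pr^1,\Bl_1\pr^2\}$ then yields the stated $6$, $11+1$, $1$, $1$ families for $\rho_X=5,6,7,8$ with the claimed toric sub-counts.

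The main obstacle is the case $\rho_X=6$: on $T=\pr^1\times\pr^1$ and $T=\Bl_1\pr^2$ both cases $A$ and $B$ contribute, and pinning down the $11$ families without double-counting requires tracking the combined shift/permutation/Cremona equivalence carefully. In particular, recognising one of the outputs as $\pr^1\times Y$, where $Y$ is the unique non-toric Fano $3$-fold of Picard rank $5$ from Corollary \ref{class}, requires identifying a specific non-trivial triple whose associated $X$ splits off a trivial $\pr^1$-factor over $Y$; this is the subtlest identification. Apart from this bookkeeping, the verification that each triple yields a genuinely new family and the torus-equivariance checks for the toric sub-count are mechanical.
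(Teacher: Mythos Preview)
Your approach is correct and matches the paper's: the paper itself gives no in-text proof beyond the sentence preceding the proposition (reducing via Theorem~\ref{delta3} to del Pezzo surfaces $T$ with $\rho_T\leq 4$, hence $5\leq\rho_X\leq 8$) and then cites \cite{delta3_4folds,delta3} for the actual enumeration; your outline is precisely how that enumeration is carried out there, including the shift/permutation/Cremona symmetries. Your argument for $\rho_X=7,8$ via the $(-1)$-curves forcing $L_1=L_2=L_3$ is a clean shortcut. One small slip: in your final tally ``$6$, $11+1$, $1$, $1$'' the ``$11+1$'' should just read $11$.
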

More generally,
given
the classification of $(n-2)$-dimensional Fano varieties, Theorem \ref{delta3} allows -- in principle -- to completely classify $n$-dimensional Fano varieties with Lefschetz defect $3$.
\section{Fano varieties with Lefschetz defect $2$}\label{sec2}
\noindent
After Theorems \ref{codim} and \ref{delta3}, the structure of Fano varieties with $\delta_X\geq 3$ is well understood. We consider in this section the next case, $\delta_X=2$, studied in \cite{codimtwo}. The following result shows that $X$ still has some interesting properties.
\begin{thm}[\cite{codimtwo}, Th.~1.2]\label{delta2}
Let $X$ be a smooth Fano variety with $\delta_X = 2$. Then one of the following
holds.
\begin{enumerate}[$(i)$]
  \item
    There exist a small modification $X\dasharrow X'$ and a conic bundle
    $X' \to Y$ where $X'$ and
$Y$ are smooth, and $\rho_X - \rho_Y = 2$.
\item There is an equidimensional fibration in del Pezzo surfaces $\psi\colon X \to T$, where $T$ has locally factorial, canonical singularities, $\codim \Sing(T) \geq 3$,
  and  $\rho_X-\rho_T=3$.
\end{enumerate}
\end{thm}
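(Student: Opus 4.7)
My plan is to apply the MMP-based strategy of Section~\ref{MMP}. Fix a prime divisor $D\subset X$ realizing $\delta_X=2$, i.e., with $\codim\N(D,X)=2$, and run an MMP for $-D$ with scaling of $-K_X$; this produces a sequence
$$X=X_0\dasharrow X_1\dasharrow\cdots\dasharrow X_k\stackrel{\ph}{\la}Y$$
with $\ph$ a fiber-type elementary contraction satisfying $D_k\cdot R_k>0$ and $-K_{X_k}\cdot R_k>0$. Proposition~\ref{MP2} furnishes a smooth prime divisor $E\subset X$ (or two disjoint such divisors, depending on the case) carrying a $\pr^1$-bundle structure with $E\cdot f=-1$, $D\cdot f>0$, and $[f]\notin\N(D,X)$. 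The dichotomy of the theorem corresponds to whether $\ph$ itself is a \emph{drop} in the MMP sense, i.e.\ whether $R_k\not\subset\N(D_k,X_k)$.

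In case~$(i)$, the final contraction $\ph$ is a drop. I claim the general fibre $F$ of $\ph$ then has dimension $1$: indeed $D_k\cdot R_k>0$ implies $F\cap D_k$ has codimension $1$ in $F$, while every curve in $F\cap D_k$ has numerical class in $\N(D_k,X_k)\cap R_k=\{0\}$, forcing $\dim(F\cap D_k)=0$ and hence $\dim F=1$. So $\ph$ is a conic bundle on $X_k$ and the MMP contains a unique birational drop $\sigma_i\colon X_i\to X_{i+1}$, which by \cite[Lemma~2.7]{codim} is the blow-up of a smooth codimension-$2$ subvariety lying in the smooth locus of $X_{i+1}$. To construct the required smooth $X'$, I would reverse the flips occurring after $\sigma_i$ to recover $X_{i+1}$, then blow up the centre of $\sigma_i$ to recover $X_i$, and finally reverse the flips between $X$ and $X_i$; the resulting map $X\dasharrow X'$ is small, $X'$ is smooth, and $X'\to Y$ is a (non-elementary) conic bundle with $\rho_{X'}-\rho_Y=2$. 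In case~$(ii)$ the final $\ph$ is not a drop, i.e.\ $R_k\subset\N(D_k,X_k)$, and the birational part of the MMP must contain two drop divisorial steps rather than one; the theorem then asserts that the fibres of $\ph$ have dimension $2$, that the composite $X\dasharrow X_k\stackrel{\ph}{\to}Y$ extends to a regular equidimensional del Pezzo fibration $\psi\colon X\to T:=Y$, and that $T$ is locally factorial with canonical singularities and $\codim\Sing(T)\geq 3$.

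I anticipate three main technical obstacles. First, in case~$(ii)$, the fibre-dimension bound $d=2$ must be established: a priori higher-dimensional Mori fibres of $\ph$ would be compatible with $R_k\subset\N(D_k,X_k)$, and ruling them out will probably require a quantitative use of the two drop divisors available via Proposition~\ref{MP2} together with an analysis of their images in $X_k$. Second, the smoothness of $X'$ in case~$(i)$, and the regularity and equidimensionality of $\psi$ in case~$(ii)$, require careful bookkeeping across the flipping and divisorial steps of the MMP, verifying in particular that the centres of the various steps are transverse to the fibres of $\ph$. Third, and most delicate, is the singularity statement on $T$: one must show that $T$ inherits local factoriality and canonical singularities from $X_k$ through $\ph$, and that $\Sing(T)$ has codimension at least $3$; this is likely the heart of the proof and involves a close analysis of $\ph$ and of the exceptional loci produced by the MMP, making essential use of the scaling hypothesis $-K_{X_i}\cdot R_i>0$ throughout the sequence.
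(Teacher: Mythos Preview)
The paper does not prove this theorem: it is quoted verbatim from \cite[Th.~1.2]{codimtwo} and stated without proof in this survey. So there is no ``paper's own proof'' to compare against; the methodology in \S\ref{MMP} is presented as background for Theorem~\ref{codim}, not as a proof of Theorem~\ref{delta2}.

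That said, your overall strategy---run a $(-D)$-MMP with scaling and organise the argument around which steps are ``drops''---is the right framework and is indeed what underlies \cite{codimtwo}. But your sketch has genuine gaps beyond the three you flag. First, your dichotomy is incomplete: in case $(ii)$ you assert that the birational part must contain \emph{two} drop steps, but if $\codim\N(D_k,X_k)=1$ and $R_k\subset\N(D_k,X_k)$, a single birational drop suffices, so you are missing a subcase. Second, even in your case $(i)$, nothing you have said forces the non-drop birational steps to be flips; there could be divisorial contractions $\sigma_j$ with $R_j\subset\N(D_j,X_j)$, and then $\rho_{X_k}<\rho_X-1$, so $\rho_X-\rho_Y>2$ and your conic bundle does not satisfy the conclusion. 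Ruling this out (or reorganising the MMP to avoid it) is a substantial part of the actual argument. Third, your construction of $X'$ is circular as written: reversing all flips and the one blow-up simply reconstructs $X$ itself, not a new smooth model carrying a conic bundle over $Y$; what is needed is a smooth small modification of $X_k$ (or of a suitable intermediate model) on which the map to $Y$ is a morphism, and producing this requires the detailed local analysis of the flipping loci relative to the fibres of $\ph$ carried out in \cite{codimtwo}.
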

It is easy to find examples (among toric Fano varieties) of $X$ as in Theorem \ref{delta2}$(i)$ where a
birational modification is necessary, namely $X$ itself does not have a conic bundle structure,
nor a fibration in del Pezzo surfaces; for instance the two toric Fano $4$-folds of combinatorial type $J$ in Batyrev's classification of toric Fano $4$-folds \cite[3.2.7]{bat2}.

We do not know whether the above result is optimal; indeed, based on examples, it may very well be that $(i)$ holds for every Fano variety with $\delta_X=2$. The author believes that the case $\delta_X=2$ has yet to be fully studied and understood.
\begin{example}[Fano $3$-folds with $\delta_X=2$]
  Let $X$ be a Fano $3$-fold with $\delta_X=2$; we have $\rho_X\in\{3,4\}$, see Table \ref{table3folds} on page \pageref{table3folds}.

  If $\rho_X=3$, then $\delta_X=\rho_X-1$ is maximal, and the structure theorem  \cite[Theorem 3.8]{minimal} yields all the possibilities for $X$; in particular there exists a conic bundle $X\to\pr^2$. These are 6 among the 31 families of Fano $3$-folds with $\rho_X=3$ (see \cite[Table 12.4]{fanoEMS}), the remaining ones have $\delta_X=1$.

Again by Table \ref{table3folds}, all Fano $3$-folds with $\rho_X=4$ have $\delta_X=2$; there are 13 families \cite[Table 12.5]{fanoEMS}, \cite{morimukaierratum}. By \cite[Theorem on p.\ 141, Proposition 7.1.11]{fanoEMS}, there is a conic bundle $X\to Y$ where $Y$ is a smooth del Pezzo surface. It is not difficult to see that $Y\not\cong\pr^2$, because the discriminant curve should have at least two connected components, so that $\rho_Y\in\{2,3\}$. If $\rho_Y=3$, then $Y\cong \text{Bl}_2\pr^2$ and
$X\cong\pr^1\times \Bl_2\pr^2$ by \cite[Theorem 7.1.15]{fanoEMS}, so that we can replace $X\to Y$ with the conic bundle $X\to \pr^1\times \mathbb{F}_1\to \pr^1\times\pr^1$. In any case we get a conic bundle $X\to Y$ with $\rho_{Y}=2$.

We summarize this example in the following table.

\begin{table}[!h]\caption{Fano $3$-folds with $\delta=2$}\label{table5}
$$\begin{array}{||c|c|c||}
\hline\hline
   
    \rho=3   & \text{6 families} & \exists\text{ conic bundle } X\to\pr^2\\
    
    \hline

   \rho=4   & \text{13 families} & \exists\text{ conic bundle }X\to\pr^1\times\pr^1\text{ or }X\to\mathbb{F}_1\\
   
\hline\hline
  \end{array}$$
\end{table}

  We conclude that all Fano $3$-folds with $\delta_X=2$ have a conic bundle $X\to Y$ with $\rho_X-\rho_Y=2$, hence satisfy $(i)$ of Theorem \ref{delta2}.
\end{example}
In the $4$-dimensional case, we have a more refined version of Theorem \ref{delta2}.
\begin{thm}\label{delta2dim4}
  Let $X$ be a Fano $4$-fold with $\delta_X=2$. Then $\rho_X\leq 12$.

  If moreover $\rho_X\geq 7$, then there exist a small modification $X\dasharrow X'$ and a conic bundle
    $f \colon X' \to Y$ where $X'$ and
$Y$ are smooth,  $\rho_X - \rho_Y = 2$, and $Y$ is \emph{weak Fano}, i.e.\ $-K_Y$ is nef and big.
\end{thm}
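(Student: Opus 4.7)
The plan is to apply Theorem \ref{delta2} to $X$ and analyze both resulting cases in dimension four. In case $(ii)$ we have an equidimensional del Pezzo fibration $\psi\colon X\to T$ with $\dim T=2$; the hypothesis $\codim\Sing(T)\geq 3$ then forces $T$ to be smooth. Since $\psi$ is a Mori fiber space from a Fano variety, pushing forward $-K_X$ and using the canonical bundle formula for del Pezzo fibrations should show that $T$ is a smooth del Pezzo surface, so $\rho_T\leq 9$ and $\rho_X=\rho_T+3\leq 12$.

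In case $(i)$ the small modification $X\dasharrow X'$ preserves the Picard number, so $\rho_{X'}=\rho_X$ and $\rho_Y=\rho_X-2$. To bound $\rho_Y$ I would show that $Y$ is weak Fano: the canonical bundle formula for the conic bundle $f\colon X'\to Y$ relates $-K_{X'}$ to $f^*(-K_Y)$ corrected by the discriminant divisor $\Delta\subset Y$, of the shape $2K_{X'}\sim f^*(2K_Y+\Delta)$. Since $-K_{X'}$ is big and movable (because $X'$ is a small modification of a Fano variety), pushing forward yields bigness of $-(2K_Y+\Delta)$; a careful analysis of the $K_Y$-negative extremal rays of $Y$, each of which lifts through $f$ to constrained Mori-theoretic data on $X'$, should promote this to nefness and bigness of $-K_Y$ itself. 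Once $Y$ is a smooth weak Fano $3$-fold, the classification of smooth weak Fano $3$-folds gives $\rho_Y\leq 10$, hence $\rho_X\leq 12$, and this simultaneously yields the weak Fano conclusion of the refined statement.

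For the second part I must exclude case $(ii)$ whenever $\rho_X\geq 7$, and would aim to show that in case $(ii)$ one always has $\rho_T\leq 3$, so that $\rho_X\leq 6$. The input is Proposition \ref{MP2} applied to a prime divisor $D$ with $\codim\N(D,X)=2$: this yields a single $\pr^1$-bundle prime divisor $E$, disjoint from $D$, whose fiber class $\gamma$ satisfies $D\cdot\gamma>0$ and $\gamma\notin\N(D,X)$. Combining the $\pr^1$-bundle structure of $E$ with the del Pezzo fibers of $\psi$ --- intersecting $E$ with a general fiber of $\psi$ and comparing rulings --- should constrain the rank of $T$ sharply.

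The main obstacle will be the nefness of $-K_Y$ in case $(i)$: the canonical bundle formula naturally provides only bigness of the adjoint pair $-(2K_Y+\Delta)$, and promoting this to a nef anticanonical class on $Y$ requires systematically ruling out $K_Y$-negative extremal rays using their lifts through $f$, the Mori fiber structure of $f$, and the hypothesis $\delta_X=2$. This extremal-ray analysis, together with control on the discriminant, is the technical heart of the argument.
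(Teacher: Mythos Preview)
Your argument has a genuine gap at the step where you deduce $\rho_X\leq 12$ in case $(i)$. You write that ``the classification of smooth weak Fano $3$-folds gives $\rho_Y\leq 10$'', but this is false: smooth weak Fano $3$-folds can have Picard number as large as $35$ (see the toric example discussed later in the paper, \S\ref{secgen}). There is no universal bound of the form $\rho\leq 10$ for weak Fano $3$-folds, so even after you establish that $Y$ is weak Fano, the inequality $\rho_Y\leq 10$ does not follow. In the paper's treatment this bound is not a general classification fact but a theorem specific to the $Y$ arising here, proved in \cite[Theorem 4.4]{eff} using the full geometric link between $Y$ and the Fano $4$-fold $X$; the paper simply quotes $\rho_X\leq 12$ from \cite[Theorem 1.2]{cdue}. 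A parallel problem occurs in your case $(ii)$: the base $T$ is smooth and of Fano type, but a smooth surface of Fano type is only a rational surface with big anticanonical class, and such surfaces can have arbitrarily large Picard number, so ``$T$ is del Pezzo, hence $\rho_T\leq 9$'' is not justified by the canonical bundle formula alone.

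There is also a structural mismatch with the paper's proof of the weak Fano claim. The paper does not use the conic-bundle canonical bundle formula and the discriminant at all; instead it first observes that $Y$ is of Fano type (hence $-K_Y$ is big) via \cite{prokshok}, and then checks $-K_Y\cdot R\geq 0$ ray by ray, invoking \cite[Lemmas 4.5 and 4.6]{eff}. The key point you are missing is that the hypothesis $\rho_X\geq 7$ is used \emph{in the nefness argument}, not only to exclude case $(ii)$: for a divisorial extremal ray of $Y$ with exceptional divisor $E$, one has $\dim\N(E,Y)\leq 2$, and if the rational map $X_1\dasharrow Y$ were regular, a prime divisor $E_X\subset X$ lying over $E$ would satisfy $\dim\N(E_X,X)\leq 4$, contradicting $\delta_X=2$ together with $\rho_X\geq 7$. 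This is precisely where the Lefschetz-defect hypothesis enters, and your outline does not account for it.
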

The bound $\rho_X\leq 12$ is obtained by studying the geometry of the weak Fano $3$-fold $Y$ and proving that $\rho_Y\leq 10$, see
 \cite[Theorem 4.4]{eff}.
\begin{proof}
By  \cite[Theorem 1.2]{cdue} we have $\rho_X\leq 12$, and if $\rho_X\geq 7$, there exists a diagram:
 $$\xymatrix{X\ar@{-->}[r]\ar[d]_g & {X'}\ar[d]\ar[dr]^f &\\
   {X_1}\ar@{-->}^h[r] & {X_1'}\ar[r]^{f_1} &Y
 }$$
 where all varieties are smooth, $X_1$ is Fano, $g$ is the blow-up of a smooth surface, $h$ is a small modification, and $f_1$ is an elementary conic bundle.
 
  We show that $Y$ is weak Fano.
 It follows from \cite[Lemma 2.8]{prokshok} that $Y$ is of Fano type, in particular
 $-K_Y$ is big.

 Let $R$ be an extremal ray of $\NE(Y)$, and $\ph\colon Y\to Y'$ the associated elementary contraction. If $\ph$ is of fiber type, then $-K_Y\cdot R>0$, because of bigness. If $\ph$ 
 is birational and small, then $-K_Y\cdot R=0$ by \cite[Lemma 4.5]{eff}.

 Finally suppose that $\ph$ is birational divisorial. We show that the composition $f_1\circ h\colon X_1\dasharrow Y$ cannot be regular, so that $-K_Y\cdot R>0$ by \cite[Lemma 4.6]{eff}. Indeed 
 let $E\subset Y$ be the exceptional divisor of $\ph$. Since $\dim \ph(E)\leq 1$, as in Example \ref{es3fold} we see that  $\dim\N(E,Y)\leq 2$. If $f_1\circ h$ were regular, let $E_X\subset X$ be a prime divisor whose image in $Y$ is $E$. Then $\dim\N(E_X,X)\leq \rho_X-\rho_Y +\dim\N(E,Y)\leq 4$, which contradicts the assumptions $\delta_X=2$ and $\rho_X\geq 7$.

 In conclusion $-K_Y\cdot R\geq 0$ for every extremal ray $R$ of $\NE(Y)$, hence $-K_Y$ is nef.
\end{proof}
We can deduce from the previous results some general properties of Fano varieties with $\delta_X\geq 2$.
\begin{lemma}\label{properties}
  Let $X$ be a smooth Fano variety with $\delta_X\geq 2$. Then the following hold:
  \begin{enumerate}[$(a)$]
    \item
  $X$ is covered by 
  a family of rational curves having intersection $2$  with $-K_X$;
\item  $X$ contains a smooth prime divisor $E$, with a $\pr^1$-bundle structure, such that $E\cdot f=-1$, where $f\subset E$ is a fiber of the $\pr^1$-bundle.
\end{enumerate}

If instead $X$ has $\delta_X=1$, then at least one of $(a)$ or $(b)$ holds.
\end{lemma}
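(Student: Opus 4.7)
The plan is to derive (b) immediately from Proposition \ref{MP2}, and to prove (a) by a case analysis on $\delta_X$ using the structure theorems of the previous sections. For (b) with $\delta_X\geq 2$, pick a prime divisor $D\subset X$ realizing $\codim\N(D,X)=\delta_X$; then Proposition \ref{MP2} produces $s=\delta_X-1\geq 1$ pairwise disjoint smooth $\pr^1$-bundle divisors $E_1,\dots,E_s$ with $E_j\cdot f_j=-1$, and any one of them serves as the divisor $E$ required by (b).

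For (a) when $\delta_X\geq 2$, I would split into three subcases. When $\delta_X\geq 4$, Theorem \ref{codim} gives $X\cong S\times T$ with $S$ a del Pezzo surface of $\rho_S\geq 5$; such an $S$ carries a conic bundle $S\to\pr^1$ whose fibers are smooth rational curves of anticanonical degree $2$, and pulling back via $X\to S$ provides the covering family on $X$. When $\delta_X=3$, Theorem \ref{delta3} exhibits $X$ as a blow-up of a $\pr^2$-bundle $Z\to T$ along three codimension-$2$ sections, so that the composition $X\to Z\to T$ has general fiber isomorphic to $\Bl_m\pr^2$ with $m\in\{3,4\}$; strict transforms of lines through one blown-up point give a covering family of rational curves of anticanonical degree $2$ on the fibers, and adjunction on a general smooth fiber promotes this to a covering family on $X$. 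When $\delta_X=2$, I invoke Theorem \ref{delta2}: in case $(i)$ general fibers of the conic bundle $X'\to Y$ are rational curves of $-K_{X'}$-degree $2$ whose strict transforms through the small modification $X\dashrightarrow X'$ cover $X$; in case $(ii)$ the relative Picard number $\rho_X-\rho_T=3$, together with a Leray-type comparison exploiting the mildness of $\Sing(T)$, forces a general smooth fiber of $\psi$ to be $\Bl_2\pr^2$, on which strict transforms of lines through a blown-up point yield the required family.

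For $\delta_X=1$, let $D\subset X$ satisfy $\codim\N(D,X)=1$ and run the MMP for $-D$ from Section \ref{MMP}, ending in a fiber type contraction $\ph\colon X_k\to Y$. If some birational step $\sigma_i$ has $R_i\not\subset\N(D_i,X_i)$, the analysis in Section \ref{MMP} yields a smooth $\pr^1$-bundle divisor $E\subset X$ with $E\cdot f=-1$, proving (b). Otherwise the codimension stays at $1$ through the birational part, and since $D_k$ dominates $Y$ the pushforward $\ph_*\colon\N(X_k)\to\N(Y)$ forces $R_k\not\subset\N(D_k,X_k)$; any curve in $D_k\cap F$ for $F$ a fiber of $\ph$ then has class in $R_k\cap\N(D_k,X_k)=\{0\}$, so $\dim F=1$ and $\ph$ is a Mori conic bundle whose general fiber is a smooth $\pr^1$ of anticanonical degree $2$. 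Such a general fiber avoids the codimension-$\geq 2$ indeterminacy locus of $X\dashrightarrow X_k$ and lifts isomorphically to a rational curve in $X$ with $-K_X\cdot{=}\,2$, and these lifts cover an open subset of $X$, giving (a). The main obstacle I anticipate is the generic-fiber identification in Theorem \ref{delta2}$(ii)$, which requires care with the singularities of $T$ and the Leray comparison, together with the bookkeeping of $-K$-degrees through divisorial MMP steps in the $\delta_X=1$ branch.
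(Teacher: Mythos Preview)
Your plan follows the paper's own proof closely: (b) via Proposition~\ref{MP2}, and (a) via the structure theorems for $\delta_X\geq 4$, $\delta_X=3$, $\delta_X=2$, with the $\delta_X=1$ case handled by the MMP construction of Section~\ref{MMP} (the paper simply cites \cite[Proposition~6.1]{codimtwo} for this last case, but your sketch is essentially what that proposition does). The treatment of $\delta_X\geq 3$ and of Theorem~\ref{delta2}$(i)$ is fine.

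The real gap is exactly where you flag it: Theorem~\ref{delta2}$(ii)$. A ``Leray-type comparison'' does \emph{not} force the general fiber $F$ of $\psi\colon X\to T$ to have $\rho_F\geq \rho_X-\rho_T$. In general one only has $\N(F,X)\subseteq\ker\psi_*$, hence $\dim\N(F,X)\leq\rho_X-\rho_T$, which is the wrong inequality. The opposite inequality $\rho_F\geq\rho_X-\rho_T$ can genuinely fail: already for a surface fibered over a curve with a reducible fiber (e.g.\ $\mathbb{F}_1\to\pr^1$ blown up in a point of a fiber), the components of the reducible fiber give divisor classes that restrict trivially to the general fiber but are not pullbacks, so $\ker\bigl(\Nu(X)\to\Nu(F)\bigr)\supsetneq\psi^*\Nu(T)$ and $\rho_F<\rho_X-\rho_T$. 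No smoothness or mildness hypothesis on $T$ rules this out. What the paper actually uses is the refined statement \cite[Theorem~5.2]{codimtwo}, which asserts that the fibration in case~$(ii)$ is \emph{quasi-elementary}, i.e.\ $\dim\N(F,X)=\rho_X-\rho_T=3$; only then does $\rho_F\geq\dim\N(F,X)=3$ follow, and hence $F$ carries a covering family of anticanonical degree~$2$. (Note also that your conclusion ``$F\cong\Bl_2\pr^2$'' is too strong: one only gets $\rho_F\geq 3$.) Likewise, in case~$(i)$ the paper invokes the same refined theorem to know that the general conic-bundle fiber lies in the isomorphism locus of $X\dasharrow X'$; your dimension-count argument that a general fiber avoids a codimension~$\geq 2$ locus happens to give the same conclusion, but you should be explicit that $-K$-degree is preserved because the map is an isomorphism near that fiber.
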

\begin{proof}
  Statement $(b)$ follows from \cite[Proposition 2.5]{codim}.
 When $\delta_X\geq 4$ (respectively $\delta_X=3$), $(a)$ is a straightforward conseguence of Theorem \ref{codim} (respectively Theorem \ref{delta3}).

We show $(a)$ when $\delta_X=2$. We apply 
  \cite[Theorem 5.2]{codimtwo}, which is a more refined version of 
  Theorem \ref{delta2}. By this result, 
in case $(i)$ of Th.~\ref{delta2}, the general fiber of the conic bundle $X'\to Y$ is contained in the open subset where $X\dasharrow X'$ is an isomorphism, so it yields the desidered family in $X$. Moreover, in case $(ii)$ of Th.~\ref{delta2}, the fibration $\psi\colon X\to T$ is ``quasi-elementary'', which means that for a general fiber $F$ we have $\dim\N(F,X)=\rho_X-\rho_T$. Then $F$ is a del Pezzo surface with $\rho_F\geq  \rho_X-\rho_T=3$, in particular it has a covering
  family of rational curves of anticanonical degree $2$, and again this yields the desired family in $X$.

  Finally the statement for the case $\delta_X=1$ follows from \cite[Proposition 6.1]{codimtwo}.
\end{proof}
\section{An overview of known Fano $4$-folds with $\rho\geq 6$}\label{sec4folds}
\noindent
In this section we describe the known examples of Fano $4$-folds with $\rho\geq 6$.
In fact, they are very few:
we have products, toric examples, two additional families with $\delta=3$ (see Proposition \ref{classification}), one $4$-fold contained in a product of grassmannians \cite{manivel4fold}, and four families constructed from the blow-ups of $\pr^4$ in points. Let us start with these last ones.
\begin{example}[Fano models of the blow-ups of $\pr^4$ in up to 8 points]\label{points}
  Let $\Bl_m\pr^4$ be the blow-up of $\pr^4$ in $m\geq 2$ general points. This $4$-fold  is not Fano, as the transform $\ell$ of a line through 2 blown-up points has $-K\cdot\ell=-1$. On the other hand, for $m\leq 8$ there exists a
  small modification $\Bl_m\pr^4\dasharrow X_m$ such that $X_m$ is smooth and Fano.

  Let us describe  this birational map.  Consider in $\Bl_m\pr^4$ the transforms of the $\binom{m}{2}$ lines through 2 blown-up points, and, for $m\in\{7,8\}$,
of  the $\binom{m}{7}$ rational normal quartics through 7 blown-up points. These curves in $\Bl_m\pr^4$ are pairwise disjoint, and are smooth rational curves with normal bundle $\ol_{\pr^1}(-1)^{\oplus 3}$.

  Let $Y_m\to \Bl_m\pr^4$ be the blow-up of all these curves. The exceptional divisors are isomorphic to $\pr^1\times\pr^2$ with normal bundle $\ol(-1,-1)$. Moreover $Y_m$ is also obtained  by blowing-up a smooth $4$-fold $X_m$ along some pairwise disjoint  smooth rational surfaces, with the same exceptional divisors.

  The $4$-fold $X_m$ is Fano, has $\rho_{X_m}=m+1\leq 9$, and it is toric if and only if $m\leq 5$.
  For $m=7$, $X_7$ is isomorphic to one of K\"uchle's Fano $4$-folds \cite{kuchle}, more precisely (b9), see \cite{parabolic}.
  For $m=8$, this example has been constructed in \cite{vb}, and is studied in detail in \cite{zhixin}.
  
  It follows from the description of the generators of the cone of effective divisors of $X_8$ \cite[\S 6.5]{vb} that $X_m$ cannot contain a prime divisor $E$ with a $\pr^1$-bundle structure such that $E\cdot f=-1$, where $f\subset E$ is a fiber of the $\pr^1$-bundle. Therefore $\delta_{X_m}\leq 1$ by Lemma \ref{properties}.
  
  On the other hand, for $m\leq 5$, $X_m$ is toric and it is easy to see that it contains pairs of disjoint prime divisors, so that  $\delta_{X_m}= 1$ in this case, by Example \ref{disjoint}.

  Note that $\Bl_m\pr^4$ is not a Mori dream space for $m\geq 9$  \cite{mukaiXIV}, therefore there cannot
be a small modification to
 a smooth Fano $4$-fold, and this construction works only for $m\leq 8$.
\end{example}
Let us consider now the toric case. Toric Fano $4$-folds are classified \cite{bat2,sato}, they are 124, and have $\rho\leq 8$; we list the ones with $6\leq\rho\leq 8$ in
 Table \ref{table}, where $X_5$ is from Example \ref{points}, while the combinatorial type $U$ refers to Batyrev's notation in \cite{bat2}.
\begin{table}[!h]\caption{Toric Fano $4$-folds with $\rho\geq 6$}\label{table}
$$\begin{array}{||c|c|c||}
\hline\hline
    
    \rho=8   & \Bl_3\pr^2\times \Bl_3\pr^2 & \delta=3 \\
    
\hline

    \rho=7 &  \Bl_3\pr^2\times \Bl_2\pr^2 & \delta=3 \\

    \hline

\rho=6 &   \Bl_2\pr^2\times  \Bl_2\pr^2 & \delta=2 \\

\cline{2-3}

     & \text{combinatorial type $U$ (8 varieties)} &  \delta=3\\

\cline{2-3}

     & X_5   &  \delta=1\\
    
\hline\hline
  \end{array}$$
\end{table}

In the recent paper \cite{manivel4fold}, Manivel constructs a Fano $4$-fold $X_L$ with $\rho_{X_L}=6$, contained in the product $\text{Gr}(1,\pr^3)\times\text{Gr}(2,\pr^4)$. Its numerical invariants are given in \cite[Prop.~4.1]{manivel4fold}, and  using these one can check that this $4$-fold is different from the other ones mentioned in this section.

Finally, 
excluding products and toric ones, to our knowledge the  known examples of Fano $4$-folds with $\rho\geq 6$ are the $4$-folds $X_6,X_7,X_8$ of Example \ref{points}, the $4$-fold $X_L$, and the two new Fano $4$-folds $X_{B_1}$ and $X_{B_2}$ with $\delta_X=3$ and $\rho_X=6$ constructed in  \cite[\S 7]{delta3} (see Proposition \ref{classification}), as shown in Table \ref{table2}.
\begin{table}[!h]\caption{Known Fano $4$-folds with $\rho\geq 6$, excluding toric ones and products}\label{table2}
$$\begin{array}{||c|c|c||}
\hline\hline
   
    \rho=7,8,9   & X_{\rho-1} & \delta\in\{0,1\} \\
    
\hline

    \rho=6 & X_{B_1} & \delta=3 \\

\cline{2-3}

     & X_{B_2} &  \delta=3\\

    \cline{2-3}

                & X_L & \delta\in\{0,1,2\}\\
    
\hline\hline
  \end{array}$$
\end{table}

In the references \cite{bat2,delta3_4folds, delta3,manivel4fold} one can find the main numerical invariants of the $4$-folds in Tables \ref{table} and \ref{table2}, such as Hodge numbers, $K_X^4$, and $h^0(X,-K_X)$. Note that all these varieties are rational.

\medskip

In particular we see that all known Fano $4$-folds with $\delta_X=2$ have $\rho_X\leq 6$, quite far from the bound $\rho_X\leq 12$ of Theorem \ref{delta2dim4}. It would be very interesting to construct new examples, maybe starting from the geometric description of the case $\rho_X\geq 7$ given by
Theorem \ref{delta2dim4}.

Finally, it is still an open question to determine a good bound on $\rho_X$ when $\delta_X\in\{0,1\}$, see \cite{small} for related results. It is expected that $\rho_X\leq 18$ for every Fano $4$-fold, and that  $X$ is a product of surfaces when $\rho_X$ is close to $18$.
\section{Special Fano's: index $>1$ and the toric case}\label{secindex}
\noindent We recall that the \emph{index} $r_X$ of a Fano variety is the divisibility of $-K_X$ in $\Pic(X)$, namely
$$r_X=\max\{m\in\Z_{>0}\,|\,-K_X\sim mH\text{ for }H\in\Pic(X)\}.$$
  A related invariant is
the \emph{pseudo-index}, defined
 as   $$\iota_X=\min\{-K_X\cdot C\,|\,C\text{ a rational curve in }X\},$$
which is a multiple of $r_X$.

One expects ``most'' Fano varieties to have $r_X=\iota_X=1$, and that Fano varieties with $\iota_X>1$ or $r_X>1$ should be simpler. For instance, Fano 4-folds with $r_X>1$ are classified and have $\rho_X\leq 4$, see \cite[Cor.~3.1.15, Table 12.1, \S 5.2, and Table 12.7]{fanoEMS}, so that the missing step in the classification of Fano $4$-folds is the case of index $r_X=1$.

With respect to the Lefschetz defect, we have the following.
    \begin{thm}[\cite{codim}, Th.~1.2]
      Let $X$ be a smooth Fano variety with  $r_X>1$, or more generally with $\iota_X>1$. Then one of the following holds:
      \begin{enumerate}[$(i)$]
      \item $\delta_X=0$;
      \item $\delta_X=1$, $\iota_X=2$, $r_X\in\{1,2\}$, and there exists a smooth morphism $X\to Y$ with fiber $\pr^1$, where $Y$ is smooth, Fano, and $\iota_Y>1$.
        \end{enumerate}
      \end{thm}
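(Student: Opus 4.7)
My plan proceeds in three stages: bound $\delta_X$, pin down $\iota_X$ in the boundary case, and construct the promised $\pr^1$-bundle.

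\textbf{Bounding the Lefschetz defect.} I would first show that $\iota_X>1$ forces $\delta_X\leq 1$. Suppose for contradiction that $\delta_X\geq 2$, and pick a prime divisor $D\subset X$ with $\codim\N(D,X)=\delta_X$. Proposition \ref{MP2} then produces $s=\delta_X-1\geq 1$ pairwise disjoint smooth prime divisors $E_j\subset X$, each admitting a $\pr^1$-bundle structure $\pi_j\colon E_j\to B_j$ with $E_j\cdot f_j=-1$ for a fiber $f_j\cong\pr^1$. Applying adjunction to $E_1\subset X$ and restricting to $f_1$ gives
$$-2=K_{E_1}\cdot f_1=K_X\cdot f_1+E_1\cdot f_1=K_X\cdot f_1-1,$$
so $-K_X\cdot f_1=1$. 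Since $f_1$ is a rational curve in $X$, this contradicts $\iota_X>1$. Thus $\delta_X\leq 1$; if $\delta_X=0$ we are in case $(i)$.

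\textbf{Computing $\iota_X$ when $\delta_X=1$.} Assume $\delta_X=1$. By Lemma \ref{properties}, either (a) $X$ has a covering family of rational curves of $-K_X$-degree $2$, or (b) $X$ contains a smooth prime divisor $E$ with a $\pr^1$-bundle structure satisfying $E\cdot f=-1$. Option (b) is excluded by exactly the adjunction computation above, so (a) must hold, forcing $\iota_X\leq 2$, hence $\iota_X=2$. Since $r_X$ divides $\iota_X$, this gives $r_X\in\{1,2\}$.

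\textbf{Constructing the $\pr^1$-bundle.} The central task is now to produce a smooth morphism $X\to Y$ with all fibers $\cong\pr^1$. I would run an MMP for $-D$ with $D$ realizing $\codim\N(D,X)=1$, arranging $-K_{X_i}\cdot R_i>0$ at each step. Any drop step (one with $R_i\not\subset\N(D_i,X_i)$) would, by the structure result cited after Proposition \ref{MP2}, yield a $\pr^1$-bundle divisor $E_j\subset X$ with $-K_X\cdot f_j=1$, contradicting $\iota_X>1$. So the MMP factors as (possibly) several non-drop birational steps $\sigma_i$ followed by a fiber-type contraction $\ph\colon X_k\to Y$ with $\codim\N(D_k,X_k)=1$. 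The key (and hardest) point is to rule out the non-drop birational steps: one must show, using $\iota_X=2$ and the classification of short extremal rays with $-K\cdot R\geq 2$, that any such step would nevertheless propagate back to a rational curve of $-K_X$-degree $1$ in $X$, a contradiction. Once this is achieved, $X=X_k$ and we have an elementary fiber-type contraction $\ph\colon X\to Y$ defined on $X$ itself.

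\textbf{Smoothness and the base.} To see that every fiber of $\ph$ is a smooth $\pr^1$, note that $-K_X\cdot R_\ph=2$ (from the covering family in (a), which must lie in $R_\ph$ since $\rho_{X/Y}=1$); a reducible or non-reduced fiber of $\ph$ would decompose as a positive $1$-cycle of total $-K_X$-degree $2$ with more than one rational component or with multiplicity, yielding a rational curve of $-K_X$-degree $\leq 1$, contradicting $\iota_X=2$. Thus $\ph$ is equidimensional with smooth $\pr^1$ fibers, hence a smooth $\pr^1$-bundle, and $Y$ is smooth. A standard computation with the relative canonical sheaf of a $\pr^1$-bundle shows that $-K_Y$ is ample since $-K_X$ is, so $Y$ is Fano; and any rational curve $C\subset Y$ lifts through a section of $\ph$ to a rational curve $\tilde C\subset X$ with $-K_X\cdot\tilde C\geq-K_Y\cdot C$, so $\iota_X>1$ gives $\iota_Y>1$. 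The main obstacle throughout is the MMP analysis of the third paragraph: ruling out non-drop birational steps requires a careful case analysis of extremal rays under the constraint $\iota_X\geq 2$, and it is here that the bulk of the technical work lies.
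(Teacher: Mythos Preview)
The paper does not prove this theorem; it is quoted from \cite{codim} without argument, so there is no ``paper's own proof'' to compare against. I can therefore only assess your proposal on its merits.

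Your first two stages are correct and make clean use of the paper's tools: Proposition~\ref{MP2} (or equivalently Lemma~\ref{properties}(b)) together with adjunction indeed forces $-K_X\cdot f=1$ on the fiber of any such $E$, so $\iota_X>1$ rules out $\delta_X\geq 2$ and, when $\delta_X=1$, rules out alternative~(b) of Lemma~\ref{properties}, yielding $\iota_X=2$ and $r_X\mid 2$.

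The remaining stages, however, contain gaps beyond the one you flag. First, the assertion that the covering family from Lemma~\ref{properties}(a) ``must lie in $R_\ph$ since $\rho_{X/Y}=1$'' is unjustified: $\rho_{X/Y}=1$ only says that curves \emph{contracted by $\ph$} span the line $\R R_\ph$, not that an arbitrary covering family of degree-$2$ rational curves is contracted. Without this you have no control on the length of $R_\ph$ or on the fiber dimension of $\ph$, so the subsequent argument that all fibers are smooth $\pr^1$'s does not get off the ground. Second, in the final paragraph the inequality is backwards: to bound $\iota_Y$ from below you need a section $\tilde C$ with $-K_X\cdot\tilde C\leq -K_Y\cdot C$ (take the section corresponding to a maximal sub-line-bundle of $\mathcal{E}|_C$), so that $-K_Y\cdot C\geq -K_X\cdot\tilde C\geq \iota_X=2$; the inequality $-K_X\cdot\tilde C\geq -K_Y\cdot C$ you wrote would prove nothing. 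The same minimal-section argument is what actually shows $Y$ is Fano --- this is correct but is a genuine computation (of Szurek--Wi\'sniewski type), not quite ``standard''. Finally, your acknowledged obstacle --- ruling out non-drop birational steps --- is real: in dimension $\geq 5$ there exist $K$-negative birational extremal rays of length $\geq 2$, so the constraint $\iota_X=2$ alone does not obviously exclude them, and it is not clear that the MMP route is the right one here.
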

 Let us consider now the case of toric Fano varieties; we have the following.
\begin{proposition}\label{toric}
  Every smooth toric Fano variety $X$ has $\delta_X\leq 3$.

  Moreover, if $\delta_X=3$, then there exist a toric Fano variety $T$  and $L_1,L_2,L_3\in\Pic(T)$ with
$-K_T+L_i-L_j$ ample for every $i,j=1,2,3$, such that  $X$ is obtained by blowing-up $Z:=\pr_T(L_1\oplus L_2\oplus L_3)$ along the three torus invariant sections of the $\pr^2$-bundle $Z\to T$.
\end{proposition}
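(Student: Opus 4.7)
The plan is to combine Theorems~\ref{codim} and~\ref{delta3} with the classification of smooth toric del Pezzo surfaces and with a torus-equivariant version of the construction of Section~\ref{MMP}.

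For the bound $\delta_X\leq 3$, I argue by contradiction. If $\delta_X\geq 4$, then Theorem~\ref{codim} gives $X\cong S\times T'$ with $S$ a smooth del Pezzo surface of Picard number $\rho_S=\delta_X+1\geq 5$. Since $X$ is toric and $\Aut(X)^0=\Aut(S)^0\times\Aut(T')^0$ for a product of two smooth Fano varieties, the acting torus of $X$ projects to tori acting with open orbits on each factor, so $S$ is a smooth toric Fano surface. But the classical classification of such surfaces yields only $\pr^2$, $\pr^1\times\pr^1$, $\mathbb{F}_1$, $\Bl_2\pr^2$, and $\Bl_3\pr^2$, all with $\rho\leq 4$, a contradiction.

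For the structural statement when $\delta_X=3$, I apply Theorem~\ref{delta3} to realize $X$ via Construction~\ref{construction} and then upgrade the data to a torus-equivariant realization. First choose a torus-invariant prime divisor $D\subset X$ with $\codim\N(D,X)=3$: such a $D$ exists because the locus of prime divisors where the Lefschetz defect is attained is torus-stable in an appropriate Chow/Hilbert parameter space and therefore contains torus-fixed points. The $-D$-MMP of Section~\ref{MMP} then proceeds torus-equivariantly: each intermediate $X_i$ is a toric variety, the divisorial contractions and flips are toric, and the pairwise disjoint $\pr^1$-bundles $E_1,E_2,E_3\subset X$ supplied by Proposition~\ref{MP2} are torus-invariant. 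Contracting the $E_j$'s yields torus-equivariant blow-downs $X\to Z$ onto a smooth projective toric variety $Z$, equipped with a torus-equivariant elementary fiber-type contraction $Z\to T$. Hence $Z\to T$ is a toric $\pr^2$-bundle over a smooth toric Fano $T$; any such bundle is of the form $\pr_T(L_1\oplus L_2\oplus L_3)$ for toric line bundles $L_j$ on $T$, and condition~\eqref{condition} is inherited from Theorem~\ref{delta3}.

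Each blown-up center $S_j\subset Z$ is now torus-invariant, smooth, irreducible, and of codimension $2$. Inspection of the fan of $Z$ shows that the torus-invariant irreducible codimension-$2$ subvarieties of $Z$ dominating $T$ are precisely the three coordinate sections $H_i\cap H_k$, where $H_j$ denotes the tautological divisor of $Z$ corresponding to the surjection $L_1\oplus L_2\oplus L_3\twoheadrightarrow L_j$; equivalently, these are the three torus-invariant sections of $Z\to T$. Since the $S_j$ are three pairwise disjoint such subvarieties, they must coincide with these three sections. This places us in case $A$ of Construction~\ref{construction} and automatically rules out case $B$, whose $S_1\to T$ is a degree-$2$ cover that cannot be a torus-invariant section. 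I expect the main obstacle to lie in making rigorous the torus-equivariance of the MMP together with the existence of a torus-invariant $D$ realizing $\delta_X$; both are standard in toric Mori theory but require careful verification. The remaining ingredients --- splitting of toric $\pr^2$-bundles and classification of torus-invariant codimension-$2$ subvarieties --- are elementary toric geometry.
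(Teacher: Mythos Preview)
Your argument for $\delta_X\leq 3$ is correct and matches the paper's.

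For the structural statement when $\delta_X=3$, there are two genuine gaps. First, the existence of a torus-invariant prime divisor $D$ with $\codim\N(D,X)=3$ is not established by your Borel-type argument. Under specialization $D_t\rightsquigarrow D_0$ in a linear system, curves in $D_t$ degenerate to $1$-cycles in $D_0$ with the same numerical class, so $\N(D_t,X)\subseteq\N(D_0,X)$ and hence $\codim\N(D_0,X)\leq\codim\N(D_t,X)$. Thus the locus where the defect is attained is \emph{open}, not closed, in the parameter space, and in any case the torus-fixed points of $|D|$ are typically reducible divisors. Second, Proposition~\ref{MP2} supplies only $s=\codim\N(D,X)-1=2$ divisors $E_j$, not three; the passage from these two $\pr^1$-bundles to the three blown-up centers of Construction~\ref{construction} is precisely the nontrivial content of the proof of Theorem~\ref{delta3}, which you are implicitly attempting to reprove rather than invoke. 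In particular, ``contracting the $E_j$'s'' from $X$ does not directly produce the $Z$ of Construction~\ref{construction}.

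The paper's route avoids both issues: it applies Theorem~\ref{delta3} as a black box to obtain the morphisms $X\to Z\to T$ of Construction~\ref{construction}, and then observes (this is the content of the cited \cite[Remark 6.1]{delta3}) that these contractions are automatically torus-equivariant, since a connected group acts trivially on $\Nu(X)$ and hence fixes every face of $\Nef(X)$. This makes $Z$ and $T$ toric and the centers $S_i$ torus-invariant, with no need to first locate a torus-invariant $D$. Your closing fan analysis---that the only torus-invariant irreducible codimension-$2$ subvarieties of $\pr_T(L_1\oplus L_2\oplus L_3)$ dominating $T$ are the three coordinate sections, thereby forcing case~$A$---is correct and is exactly what finishes the argument once equivariance is known.
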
  
\begin{proof}
  If $X$ is a Fano variety with $\delta_X\geq 4$, then by Theorem \ref{codim} we have $X\cong S\times T$ where $S$ is a del Pezzo surface with $\rho_S=\delta_X+1\geq 5$. In particular $S$ is not toric, as toric del Pezzo surfaces have $\rho\leq 4$, therefore $X$ is not toric either.

 Now if $X$ is a toric Fano variety with $\delta_X=3$, then we can apply Theorem \ref{delta3}; moreover, since $X$ is toric, only case $A$ can occur, and $T$ must be toric too, see \cite[Remark 6.1]{delta3}.
\end{proof}  
It is interesting to note that Proposition \ref{toric}
is very much related to
\cite[Theorem 3.4]{fano},  proved much earlier with completely combinatorial methods. This last result says that for every torus invariant prime divisor $D\subset X$, one has $\codim\N(D,X)\leq 3$, and if there is a $D$ with 
$\codim\N(D,X)= 3$, then $X$ is a toric $(\Bl_3\pr^2)$-bundle onto a toric Fano variety $T$.
If instead $\codim\N(D,X)= 2$, then one gets a statement similar
to Theorem \ref{delta2}.
\section{Beyond smooth Fano's}\label{secgen}
\noindent The notion of Lefschetz defect can be introduced for any  projective variety; anyway, the good properties of $\delta_X$ strongly depend on the Fano assumption. Indeed,  it seems that already when $X$ is weak Fano  there is no analog of Theorem \ref{codim}, as the following example shows.
\begin{example}[a toric weak Fano $3$-fold]
  After \cite{kreuzerskarke}
  we know that $\rho_X\leq 35$
for every  smooth, toric weak Fano $3$-fold $X$, and the bound is sharp.
One example with $\rho_X=35$ is described in detail in 
\cite[\S 7]{anna}, together with the extremal rays of $\NE(X)$. In particular, this $X$ contains a  divisor isomorphic to $\pr^2$, so that $\delta_X=34$.
\end{example}  
For singular Fano varieties, we have the following.
\begin{thm}[\cite{gloria}]
  Let $X$ be a $\Q$-factorial Gorenstein Fano variety, with canonical singularities, and at most finitely many non-terminal points. Then $\delta_X\leq 8$.

  Moreover, if $\delta_X\geq 4$, then there is a finite morphism $X\to S\times T$, where $S$ and $T$ are normal with rational singularities, and $\dim S=2$.
\end{thm}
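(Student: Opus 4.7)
The plan is to adapt the MMP-based strategy of Section \ref{MMP} to this singular setting, exploiting that a $\Q$-factorial Gorenstein canonical Fano variety is still of Fano type (hence a Mori dream space by \cite{BCHM}), and that the hypothesis of only finitely many non-terminal points is mild enough to preserve the structural results on extremal contractions used in the smooth case. Concretely, I would pick a prime divisor $D\subset X$ realizing $\codim\N(D,X)=\delta_X$ and run an MMP for $-D$ with scaling of $-K_X$ exactly as in Section \ref{MMP}; since $-D$ can never become semiample, the program terminates in a fiber type contraction, and along the way the drops in $\codim\N(D_i,X_i)$ are governed by the same dichotomy as in the smooth case.

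For the bound $\delta_X\leq 8$, the goal is to reproduce Proposition \ref{MP2}: extract $s=\delta_X-1$ pairwise disjoint prime divisors $E_1,\dotsc,E_s$, each a $\pr^1$-bundle with $E_j\cdot f_j=-1$, $D\cdot f_j>0$, and $[f_j]\notin\N(D,X)$. The crucial step is the singular analog of the Ando--Wi\'sniewski theorem for elementary extremal contractions with one-dimensional fibers and $-K$ relatively ample: one needs to show that such a contraction is a smooth blow-up of a smooth codimension two subvariety contained in the smooth locus. Here the hypothesis of finitely many non-terminal points is decisive, because the center of the would-be blow-up is codimension two and can be arranged (after slight perturbation, or by a general-position argument on the support of $D$) to avoid the isolated non-terminal points; on the terminal Gorenstein locus, $X$ is smooth in codimension two, so the classical Ando--Wi\'sniewski result applies. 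Once the $E_j$'s are in hand, the combinatorial bound $s\leq 7$ from \cite{codim} carries over verbatim, yielding $\delta_X\leq 8$.

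For the second assertion, assume $\delta_X\geq 4$. In the smooth case the disjoint $E_j$'s, together with the extremal rays whose contractions do not contract them, integrate into two complementary elementary fibrations $X\to S$ and $X\to T$ whose product is an isomorphism. In the singular setting one cannot expect strict splitting: the fibrations may have ramification, or monodromy near the non-terminal points. What I would show instead is that $X$ carries two contractions $f\colon X\to S_0$ and $g\colon X\to T_0$ with $\dim S_0=2$ and whose combined map $(f,g)\colon X\to S_0\times T_0$ is surjective with finite general fibers; Stein factorization then produces a finite morphism $X\to S\times T$ with $S$, $T$ normal. Rational singularities of the bases follow because they are images under contractions from a variety of Fano type satisfying relative Kawamata--Viehweg vanishing (as in \cite[Lemma 2.8]{prokshok}), and the two-dimensional factor $S$ is automatically a normal surface with rational singularities.

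The main obstacle is the first step, namely the singular Ando--Wi\'sniewski assertion: verifying that the divisorial contractions appearing in the MMP are actually smooth blow-ups of smooth codimension two centers, despite the possible presence of isolated non-terminal (canonical) points on the intermediate models $X_i$. The hypothesis of finitely many non-terminal points is tailor-made to make this work, but tracking those points through flips and divisorial contractions, and ensuring that the $E_j$'s and their centers genuinely miss them, is the technical heart of the proof; once this is secured, the remaining combinatorics and the construction of the finite cover to $S\times T$ proceed along the lines already developed in \cite{codim} and \cite{codimtwo}.
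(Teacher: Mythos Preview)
The paper does not give its own proof of this theorem; it is a survey, and the result is simply cited from \cite{gloria}. What the paper \emph{does} provide, in the paragraph immediately following the statement, is a diagnosis of exactly where the smooth strategy breaks down---and your proposal runs directly into that obstruction.

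You assert that the relevant extremal contractions with one-dimensional fibers and $-K$ relatively ample are smooth blow-ups of smooth codimension-two centers, justifying this by saying that on the terminal Gorenstein locus $X$ is smooth in codimension two, so Ando--Wi\'sniewski applies. But the paper explicitly points out that a $\Q$-factorial Gorenstein Fano variety with canonical singularities can have \emph{small} elementary contractions with fibers of dimension $\leq 1$ (citing \cite[Example 2.11]{gloria}). Smoothness in codimension two does not prevent this: a one-dimensional exceptional locus can pass through an isolated singular point, and then the contraction need not be divisorial at all, let alone a smooth blow-up. Your ``general-position argument on the support of $D$'' cannot help here, since the extremal ray $R_i$ is determined by the geometry of $X_i$, not by a choice you get to perturb.

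So the genuine content of \cite{gloria} is not the bookkeeping of tracking finitely many non-terminal points through the MMP, as your last paragraph suggests; it is establishing, under the hypothesis of finitely many non-terminal points, a replacement for the Ando--Wi\'sniewski structural result that copes with (or rules out) these small contractions. Your sketch treats the divisorial conclusion as a target reachable by mild adaptation, whereas the paper identifies it as the point where the smooth argument actually fails and new work is required.
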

The main point where the technique used in the proof of Theorem \ref{codim} breaks, when $X$ is too singular or not Fano, is the following. As explained in \S \ref{MMP}, the properties of elementary birational contraction with fibers of dimension $\leq 1$ play a crucial role in the construction which is the starting point for the proof of the theorem. In particular, if $X$ is smooth and Fano, such a contraction is always divisorial. Instead,
when $X$ is  weak Fano, it can have small (crepant) elementary contractions having fibers of dimension $\leq 1$. Similarly, if $X$ is a $\Q$-factorial Gorenstein Fano variety with canonical singularities, again it can have small  elementary contractions having fibers of dimension $\leq 1$, see \cite[Example 2.11]{gloria}.

\providecommand{\noop}[1]{}
\providecommand{\bysame}{\leavevmode\hbox to3em{\hrulefill}\thinspace}
\providecommand{\MR}{\relax\ifhmode\unskip\space\fi MR }
\providecommand{\MRhref}[2]{%
  \href{http://www.ams.org/mathscinet-getitem?mr=#1}{#2}
}
\providecommand{\href}[2]{#2}

\end{document}